\newtheorem{thm}{Theorem}[section]
\newtheorem{cor}[thm]{Corollary}
\newtheorem{lem}[thm]{Lemma}
\newtheorem{summ}[thm]{Summary}
\newtheorem*{thmA}{Theorem A}
\newtheorem*{thmB}{Theorem B}
\newtheorem*{thmC}{Theorem C}
\newtheorem*{thmD}{Theorem D}
\newtheorem*{thmE}{Theorem E}
\newtheorem*{thmF}{Theorem F}
\theoremstyle{definition}
\newtheorem{defn}[thm]{Definition}
\newtheorem{rem}[thm]{Remark}
\newtheorem*{rem*}{Remark}
\newtheorem*{rems*}{Remarks}
\newtheorem{ex}[thm]{Example}
\newtheorem*{ex*}{Example}
\numberwithin{equation}{section}
\definecolor{OrangeRed}{cmyk}{0,0.6,1,0}            
\definecolor{DarkBlue}{cmyk}{1,1,0,0.20}
\definecolor{DarkGreen}{cmyk}{1,0,0.6,0.2}
\definecolor{myblue}{rgb}{0.66,0.78,1.00}
\definecolor{Violet}{cmyk}{0.79,0.88,0,0}
\definecolor{Lavender}{cmyk}{0,0.48,0,0}
\definecolor{purpleheart}{rgb}{0.41, 0.21, 0.61}
\definecolor{brick}{cmyk}{0,0.8,0.3,0.5}
\definecolor{LightGray}{cmyk}{0,0,0,0.5}
\newcommand{\dist}{\operatorname{dist}}
\newcommand{\D}{{\mathbb D}}
\newcommand{\N}{{\mathbb N}}
\renewcommand{\epsilon}{\varepsilon}
\renewcommand{\phi}{\varphi}
\renewcommand{\tilde}{\widetilde}
\title{Shrinking targets and recurrent behaviour for forward compositions of inner functions}
\author[1]{\hspace{1.7cm}Anna Miriam Benini \thanks{Partially supported by GNAMPA, INdAM; by the French Italian University and Campus France through the Galileo program, under the project “From rational to transcendental: complex dynamics and parameter spaces”; PRIN (2022) Real and Complex Manifolds: Geometry and holomorphic dynamics}}
\author[4]{Vasiliki Evdoridou}
\author[2,3]{N\'uria Fagella 
 \thanks{Supported by the Spanish State Research Agency through the MdM grant CEX2020-001084-M and grant
PID2020-118281GB-C32; and by the Catalan government through ICREA Academia 2020. }}
\author[4]{\hspace{2cm} Philip J. Rippon}
\author[4]{Gwyneth M. Stallard}
\affil[1]{\small Dep. of Mathematical, Physical and Computer Sciences, Universit\`a di Parma, Italy.}
\affil[2]{\small Dep. de Matem\`atiques i Inform\`atica, Universitat de Barcelona, Catalonia, Spain.}
\affil[3]{\small Centre de Recerca Matemàtica, Bellaterra, Catalonia, Spain.}
\affil[4]{\small School of Mathematics and Statistics, The Open University, Milton Keynes, UK.}
\date{\today}
\date{}
\begin{document}

\maketitle
\begin{abstract}

 We prove sharp results about recurrent behaviour of orbits of forward compositions of inner functions, inspired by fundamental results about iterates of inner functions, and give examples to illustrate behaviours that cannot occur in the simpler case of iteration.

A result of Fern\'andez, Meli\'an and Pestana gives a  precise version of the classical Poincar\'e recurrence theorem for iterates of the boundary extension of an inner function that fixes~0. We generalise this to forward composition sequences $F_n=f_n\circ \dots\circ f_1,$ $n\in \N,$ where $f_n$ are inner functions that fix~0, giving conditions on the contraction of $(F_n)$ so that the radial boundary extension $F_n$ hits any shrinking target of arcs $(I_n)$ of a given size.

Next, Aaronson, and also Doering and Ma\~n\'e, gave a remarkable dichotomy for iterates of any inner function, showing that the behaviour of the boundary extension is of two entirely different types, depending on the size of the sequence $(|f^n(0)|)$. In earlier work, we showed that one part of this dichotomy holds in the non-autonomous setting  of forward compositions.

It turns out that this dichotomy is closely related to the result of Fern\'andez, Meli\'an and Pestana, and here we show that a version of the second part of the dichotomy holds in the non-autonomous setting provided we impose a condition on the contraction of $(F_n)$ in relation to the size of the sequence $(|F_n(0)|)$. The techniques we use include a strong version of the second Borel--Cantelli lemma and  strong mixing results of Pommerenke for contracting sequences of inner functions. We give examples to show that the contraction conditions that we need to impose in the non-autonomous setting are best possible.
\end{abstract}

\section{Introduction}
Classical ergodic theory concerns the behaviour of the iterates $T^n$, $n\in \N$, of a transformation~$T$ on a space~$X$, which is measure-preserving in the sense that the measure of $T^{-1}(E)$ is identical to that of $E$ for all measurable sets $E\subset X$, with a focus on properties such as recurrence, mixing, ergodicity, exactness, etc. Less research has so far been carried out on ergodic theory in the {\em non-autonomous} setting when compositions of sequences of measure-preserving transformations are considered instead of iterates of one particular transformation. In this paper we focus on ergodic theory for such compositions, in particular for forward compositions of radial boundary extensions of inner functions; an inner function $f$ is a holomorphic self-map of the unit disc $\D=\{z:|z|<1\}$ whose radial boundary extension, which we also denote by~$f$, maps $\partial \D$ to $\partial \D$, apart from a set of measure 0.

A foundational result in ergodic theory is the Poincar\'{e} recurrence theorem, a strong consequence of which can be stated as follows; see \cite[Theorem~A'] {Fern}, for example.

\begin{thm}\label{PRT1}
Let $(X,d)$ be a separable metric space, $\mu$ a positive measure on $X$ normalised so $\mu(X)=1$, and $T:X \to X$ a measure-preserving transformation which is also ergodic (that is, for every measurable $E \subset  X$ with $E = T^{-1}(E)$ we have
$\mu(E) = 0$ or $\mu(X \setminus E) = 0$).

Then, for every $x_0\in X$,
\begin{equation}\label{ThmA'}
\liminf_{n\to\infty}d(T^n(x),x_0)= 0, \; \text{ for } \mu\text{-almost every } x\in X.
\end{equation}
In particular, almost every orbit under $T$ is dense in $X$.
\end{thm}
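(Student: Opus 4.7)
The plan is to combine the classical Poincar\'e recurrence theorem with ergodicity, pulling the conclusion through a countable basis provided by separability.

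First I would use the separability of $(X,d)$ to fix a countable dense set $\{y_k\}_{k\in\N}\subset X$ and form the countable family of open balls $\BB=\{B(y_k,1/m):k,m\in\N\}$, which is a basis for the topology of $X$. The key reduction is: it suffices to show that for $\mu$-a.e.\ $x\in X$, the orbit $(T^n x)_{n\ge 0}$ enters every $B\in\BB$ with $\mu(B)>0$ infinitely often. Both the $\liminf$ statement (by applying this to the balls $B(x_0,1/k)$, each of which contains some element of $\BB$ of positive measure whenever $x_0\in\operatorname{supp}(\mu)$) and the density assertion then follow by intersecting countably many full-measure sets.

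Next I would establish the infinitely-often statement for a single $B\in\BB$ with $\mu(B)>0$. The classical Poincar\'e recurrence theorem gives that $\mu$-a.e.\ $x\in B$ returns to $B$ infinitely often; the standard argument is to let $W\subset B$ be the set of non-returning points, observe that the preimages $\{T^{-n}(W)\}_{n\ge 0}$ are pairwise disjoint (directly from the definition of $W$) and each has measure $\mu(W)$ by measure preservation, forcing $\mu(W)=0$ from $\mu(X)=1$. Setting $B^{\ast}=\limsup_n T^{-n}(B)$, one then has $\mu(B^{\ast})\ge\mu(B)>0$; since $T^{-1}(B^{\ast})=B^{\ast}$, ergodicity promotes this to $\mu(B^{\ast})=1$. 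Intersecting over the countable subfamily $\{B\in\BB:\mu(B)>0\}$ produces the desired full-measure set.

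The main subtlety — and the only real potential obstacle — is a support issue implicit in the hypotheses: if $\mu$ does not have full support and $x_0\notin\operatorname{supp}(\mu)$, then $\mu(B(x_0,\varepsilon))=0$ for some $\varepsilon>0$, whence $\mu\bigl(\bigcup_n T^{-n}(B(x_0,\varepsilon))\bigr)=0$ and hence $\liminf_n d(T^n x,x_0)\ge\varepsilon$ on a set of full measure, contradicting the stated conclusion. Thus the statement should be read under the (implicit) assumption that $\mu$ has full support, or restricted to $x_0\in\operatorname{supp}(\mu)$; with that understood, the argument above is a clean combination of recurrence and ergodicity applied through the countable basis.
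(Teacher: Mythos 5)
The paper offers no proof of this theorem to compare against: it is quoted as a known result, with a pointer to \cite[Theorem~A']{Fern}, so your argument stands or falls on its own. It stands. The route you take --- countable basis from separability, Poincar\'e recurrence to get $\mu(B^{\ast})>0$ for $B^{\ast}=\limsup_n T^{-n}(B)$, ergodicity to promote this to full measure via the strict invariance $T^{-1}(B^{\ast})=B^{\ast}$, and a countable intersection over the positive-measure basis elements --- is the standard proof and all the steps check out. (A small remark: you do not even need the ``infinitely often'' form of Poincar\'e recurrence to get $\mu(B^{\ast})>0$; since $\mu$ is finite, $\mu(B^{\ast})=\lim_{N}\mu\bigl(\bigcup_{n\ge N}T^{-n}(B)\bigr)\ge\mu(T^{-N}(B))=\mu(B)$ by measure preservation and continuity from above.)

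Your observation about the support of $\mu$ is also correct and worth making explicit: as literally stated, with ``for every $x_0\in X$'', the conclusion fails whenever $x_0\notin\operatorname{supp}(\mu)$, by exactly the argument you give ($\mu(T^{-n}(B(x_0,\varepsilon)))=\mu(B(x_0,\varepsilon))=0$ for all $n$). The statement must be read with $\mu$ of full support, or with $x_0$ restricted to $\operatorname{supp}(\mu)$; the same caveat applies to the final density assertion. In the application the paper has in mind ($T$ the boundary map of a centred inner function, $\mu$ normalised Lebesgue measure on $\partial\D$) the measure does have full support, so nothing is lost there.
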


If $T$ is the boundary extension of an inner function $f$ such that $f(0)=0$ then $T$ is (Lebesgue) measure-preserving; see Lemma~\ref{lem:Lowner}.  Following \cite{ivrii-urb}, we say that an inner function~$f$ satisfying $f(0)=0$ is {\em centred}.

In this centred case, and assuming that $f$ is not a rotation, Fern\'andez et al \cite[Theorem~2] {Fern} proved the following version of Theorem~\ref{PRT1} with a stronger conclusion. Here, and subsequently, we denote the radial boundary extension of an inner function $f$ by the same letter.
\begin{thm}\label{FernThm2}
Let $f$ be a centred inner function such that $|f'(0)|<1$. Then, for any positive decreasing sequence $(r_n)$ such that $\sum_{n=1}^\infty r_n=\infty$ and every $\zeta_0\in\partial \D$, we have
\begin{equation}\label{target}
\liminf_{n\to\infty}|f^n(\zeta)-\zeta_0|/r_n\le 1,\quad \text{for almost every } \zeta\in \partial \D.
\end{equation}

\end{thm}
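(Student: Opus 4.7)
The plan is to rephrase the conclusion as a shrinking-target problem and attack it via a divergent-series Borel--Cantelli argument. Define
$$A_n := \{\zeta\in\partial\D : f^n(\zeta)\in I_n\},$$
where $I_n\subset\partial\D$ is the arc of length $2r_n$ centred at $\zeta_0$. The inequality \eqref{target} for a given $\zeta$ is implied by $\zeta\in A_n$ for infinitely many $n$, so it suffices to prove $\mu(\limsup_n A_n)=1$, where $\mu$ denotes normalised Lebesgue measure on $\partial\D$.

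\textbf{Step 1 (the divergence input).} Since $f$ is centred, L\"owner's lemma (Lemma~\ref{lem:Lowner}) says the radial boundary extension $f$ preserves $\mu$. Hence
$$\mu(A_n)=\mu(f^{-n}(I_n))=\mu(I_n)=\frac{r_n}{\pi},$$
and the hypothesis $\sum r_n=\infty$ gives $\sum_n\mu(A_n)=\infty$. Without the condition $|f'(0)|<1$ the summability would be useless (rotations carry each $I_n$ rigidly and \eqref{target} can fail); the contraction $|f'(0)|<1$ is exactly what produces the mixing needed for quasi-independence of the $A_n$.

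\textbf{Step 2 (correlation estimates via Pommerenke mixing).} I would apply the Kochen--Stone (strong second) Borel--Cantelli lemma, for which one needs to control
$$\mu(A_n\cap A_m)=\mu\bigl(f^{-n}(I_n)\cap f^{-n}(f^{-(m-n)}(I_m))\bigr)=\mu\bigl(I_n\cap f^{-(m-n)}(I_m)\bigr),\qquad n<m,$$
where the last equality uses the measure-preserving property of $f^n$. For centred inner $f$ with $|f'(0)|<1$, Pommerenke's exponential mixing results provide constants $C>0$ and $\rho\in(0,1)$ (depending only on $|f'(0)|$) with
$$\bigl|\mu(I\cap f^{-k}(J))-\mu(I)\mu(J)\bigr|\le C\rho^k\mu(I)$$
for arcs $I,J\subset\partial\D$. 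Summing,
$$\sum_{1\le n<m\le N}\mu(A_n\cap A_m)\le \Bigl(\sum_{n\le N}\mu(A_n)\Bigr)^2+C'\sum_{n\le N}\mu(A_n),$$
so Kochen--Stone yields $\mu(\limsup_n A_n)\ge c>0$.

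\textbf{Step 3 (upgrading to full measure).} The event $\limsup_n A_n=\bigcap_N\bigcup_{n\ge N}f^{-n}(I_n)$ lies in the tail $\sigma$-algebra $\bigcap_N f^{-N}(\mathcal B)$. Since $f$ is a centred non-rotation inner function, the dichotomy of Aaronson and Doering--Ma\~n\'e places us in the ``dissipative'' regime in which the boundary map is exact, so its tail $\sigma$-algebra is trivial. Hence $\mu(\limsup_n A_n)\in\{0,1\}$, and Step~2 forces the value~$1$, proving \eqref{target}.

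\textbf{Main obstacle.} The delicate point is Step~2: one needs a mixing bound whose error $C\rho^k\mu(I)$ is genuinely smaller than the main term $\mu(I)\mu(J)$ uniformly in the size of the shrinking arcs, so that the correlation sum remains $o\bigl((\sum\mu(A_n))^2\bigr)$. Verifying that Pommerenke's inequalities can be applied to arcs $I_n,I_m$ whose length may be far smaller than $\rho^k$—and that the constants depend only on $|f'(0)|$—is the technical heart of the argument. Everything else (measure preservation, Kochen--Stone, exactness) is formal once this quantitative mixing estimate is in place.
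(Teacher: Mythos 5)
Your proof is essentially the argument the paper gives for its generalisation, Theorem~C (note that the paper only cites Theorem~\ref{FernThm2} from \cite{Fern} and does not reprove it): pull back the arcs, use L\"owner's lemma (Lemma~\ref{lem:Lowner}) for measure preservation, use Pommerenke's uniform mixing estimate (Lemma~\ref{lem:Pom3}) for the pairwise correlation bound, and conclude by a divergence Borel--Cantelli argument. The only structural difference is the last step: the paper invokes Philipp's strong Borel--Cantelli lemma (Lemma~\ref{lem:Philipp}), which yields full measure in one stroke, whereas you use Kochen--Stone to get positive measure and then a zero--one law; that upgrade is legitimate, but your justification is mislabelled --- a centred non-rotation inner function lies in the \emph{conservative} case of the ADM dichotomy (since $f^n(0)=0$), and the tail-triviality you need is exactly the paper's Lemma~\ref{lem:Pomgen} (Pommerenke's zero--one law for contracting sequences), applied as in the proof of Theorem~B. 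One further small point: in Pommerenke's estimate the error term is proportional to the measure of the \emph{pulled-back} (later) arc, i.e.\ it is $C\rho^{k}\mu(J)$ rather than $C\rho^{k}\mu(I)$; this does not affect your summed bound, and it is precisely this multiplicative factor that disposes of the worry in your final paragraph about arcs much shorter than $\rho^{k}$.
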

The proof uses the fact that in this situation the radial boundary values of $f$ satisfy a certain strong mixing property due to Pommerenke, which we describe in Section~\ref{Prelims}.

Estimates such as \eqref{target} can be thought of as examples of `shrinking target' results, where we study the set of points whose orbits hit a shrinking sequence of sets infinitely often, a concept introduced by Hill and Velani \cite[Theorems~1--4]{HV} in the context of the behaviour of orbits lying in the Julia set of an expanding rational map. Since then, the notion of shrinking targets has played a major role in ergodic theory and its applications; see, for example, \cite{MagnusA},  \cite{BakerKoi}, \cite{Kirsebom}.

 There are two main aims of this paper. The first is to identify conditions (ideally best possible) under which shrinking target results of this type hold for non-autonomous systems of forward composition sequences $F_n=f_n\circ \cdots \circ f_1$, where each $f_n, n\in \N$ is a centred inner function. It is useful here to adopt a rather general definition of shrinking target.

\begin{defn}
A {\em target} is a sequence $(I_n)$ of arcs of $\partial \D$, not necessarily nested and not necessarily shrinking in length. The target is {\em shrinking} if $|I_n| \to 0$ as $n\to \infty$,  where $|I_n|$  denotes the length of the arc $I_n$.

Let $F_n$ be inner functions for $n\in \N$ and let $\zeta\in\partial\D$. The sequence $(F_n(\zeta))$ {\em hits} the target $(I_n)$ if we have $F_n(\zeta)\in I_n$ infinitely often; that is,
\[
\zeta \in \limsup_{n\to\infty}F_n^{-1}(I_n):=\bigcap_{N=1}^\infty \bigcup_{n=N}^\infty F_n^{-1}(I_n).
\]
\end{defn}
 We start with the following theorem, which is easy to prove,  in which the arcs in the shrinking target have finite total length; note that the functions $F_n$ in this result are not necessarily forward compositions and also not necessarily inner functions.
\begin{thmA}\label{thm:easy}
Let $(F_n)$  be a sequence of holomorphic self-maps of $\mathbb{D}$ such that $F_n(0)=0$ for $n\in \N$ and suppose that $(I_n)$ is a shrinking target in $\partial\D$ such that
\[
\sum_{n=1}^\infty |I_n|<\infty.
\]
Then $(F_n(\zeta))$ fails to hit $(I_n)$ for almost all $\zeta\in\partial\D$.
\end{thmA}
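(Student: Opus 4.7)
The plan is to reduce the statement directly to the first Borel--Cantelli lemma by controlling the measures $|F_n^{-1}(I_n)|$. Since $F_n\colon\D\to\D$ is holomorphic with $F_n(0)=0$, the radial boundary extension of $F_n$ (which exists a.e.\ by Fatou's theorem, although it may take values inside $\D$ on a set of positive measure if $F_n$ is not inner) satisfies L\"owner's lemma: for every measurable $E\subset\partial\D$,
\[
|F_n^{-1}(E)| \le |E|,
\]
where $F_n^{-1}(E)=\{\zeta\in\partial\D: F_n(\zeta)\in E\}$ and $F_n(\zeta)$ denotes the radial limit. This is exactly Lemma~\ref{lem:Lowner} cited in the paper.

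Applying this with $E=I_n$ yields $|F_n^{-1}(I_n)|\le |I_n|$, and consequently
\[
\sum_{n=1}^\infty |F_n^{-1}(I_n)| \le \sum_{n=1}^\infty |I_n| <\infty.
\]
The first Borel--Cantelli lemma (applied to Lebesgue measure on $\partial\D$, normalised to a probability measure) then gives
\[
\Big|\limsup_{n\to\infty} F_n^{-1}(I_n)\Big| = 0.
\]
By the definition of ``hitting'' a target, this is precisely the assertion that $(F_n(\zeta))$ fails to hit $(I_n)$ for almost every $\zeta\in\partial\D$.

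Note that no assumption beyond $F_n(0)=0$ and summability of $(|I_n|)$ is required: the functions need not be inner, need not form a composition sequence, and need not be related to one another at all. Since the argument is just L\"owner plus the easy half of Borel--Cantelli, there is no real obstacle; the only point to be careful about is the definition of $F_n^{-1}(I_n)$ when $F_n$ is not assumed to be inner, but this is handled uniformly by interpreting the preimage through the a.e.-defined radial boundary function.
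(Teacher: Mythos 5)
Your proof is correct and is essentially identical to the paper's: L\"owner's lemma gives $|F_n^{-1}(I_n)|\le|I_n|$, summability follows, and the first Borel--Cantelli lemma finishes. Your added remark about interpreting $F_n^{-1}(I_n)$ via the a.e.-defined radial boundary function is a reasonable point of care but does not change the argument.
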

The case of shrinking targets $(I_n)$ for which $\sum_{n=1}^\infty |I_n|$ is divergent, corresponding to the condition $\sum_{n=1}^\infty r_n=\infty$ in Theorem~\ref{FernThm2}, is more interesting and we will focus on this condition in the context of forward compositions of centred inner functions.

 Before doing so, we mention the other main aim of our work, which arises from a striking dichotomy obtained by Aaronson~\cite{aaronson} and by Doering and Ma\~{n}\'{e}~\cite{doering-mane} showing that iterates of the boundary values of a general inner function $f$ (without the restriction that $f(0)=0$) behave in two entirely different ways, depending on the proximity of the orbits of interior points to $\partial \D$. In \cite{BEFRS2} we made progress on extending this dichotomy to the non-autonomous setting and here we use our results on shrinking targets to obtain significant new results which show that in the non-autonomous case there are interesting differences from the case of iteration of a single inner function.

In \cite{BEFRS2} we introduced a classification of forward compositions of holomorphic maps based on the extent to which these sequences contract the hyperbolic metric;  see also~\cite{BEFRS}, where we introduced this classification in the context of wandering domains of transcendental entire functions. In particular, we say that such a forward composition sequence $(F_n)$ is {\em contracting} if the hyperbolic distance between $F_n(z)$ and $F_n(z')$ tends to 0 as $n\to\infty$ for any  (and therefore every) distinct pair $z,z'\in\D$. Examples in \cite[Section~8]{BEFRS2} show that for forward compositions of inner functions we can only hope to  obtain a dichotomy of the type given in~\cite{aaronson} and~\cite{doering-mane} by assuming that these forward compositions are contracting.

In the situation where $F_n=f_n\circ \dots\circ f_1$ is a forward composition of self-maps of $\D$, the sequence $(F_n)$ is contracting if and only if
\begin{equation}\label{eq:contracting}
\lambda_n \ldots \lambda_1 \to 0\;\text{ as } n\to \infty,\quad\text{or equivalently,}\quad \sum_{n=1}^\infty (1-\lambda_n) = \infty,
\end{equation}
where $\lambda_n$ is the {\em hyperbolic distortion} of $f_n$ at $F_{n-1}(0)$, given by
\begin{equation}\label{lambda-n}
\lambda_n=\frac{\rho_\D(F_{n}(0)) |f_n'(F_{n-1}(0))|}{\rho_\D(F_{n-1}(0))};
\end{equation}
see \cite[Theorem~7.2]{BEFRS2}. Here $\rho_\D(z)$ denotes the hyperbolic density at $z \in \D$.  In this situation, we always take $F_0$ to be the identity map. In the case when the functions $f_n$ fix 0, we have $\lambda_n=|f'_n(0)|$.

We now return to the question of whether shrinking targets are hit by forward compositions of centred inner functions. We have the following basic dichotomy, which follows from another result of Pommerenke given in Section~\ref{Prelims}.
\begin{thmB}\label{dich}
Let $F_n=f_n\circ \cdots \circ f_1$, $n\in \N$, be a contracting forward composition of centred inner functions and let $(I_n)$ be any shrinking target. Then
\[
\{\zeta\in\partial \D: (F_n(\zeta)) \text{ hits } (I_n)\}
\]
has either full measure or zero measure in $\partial \D$.
\end{thmB}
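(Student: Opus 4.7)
My plan is to derive the dichotomy directly from Pommerenke's mixing-type theorem for contracting forward compositions of centred inner functions, which will appear in Section~\ref{Prelims}. I expect Pommerenke's result, in this setting, to take the form
\[
m\bigl(F_n^{-1}(A) \cap B\bigr) \longrightarrow m(A)\, m(B) \quad \text{as } n \to \infty,
\]
for all measurable $A, B \subset \partial\D$, where $m$ is normalised Lebesgue measure. The contracting hypothesis $\sum_n(1-\lambda_n)=\infty$ is precisely what is required to apply such a mixing result.

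Set $A_n := F_n^{-1}(I_n)$ and $E := \limsup_n A_n$. The first key observation is that $E$ is a tail event with respect to the filtration $\sigma(F_n : n \geq N)$, since $E = \limsup_{n \geq N} A_n$ for every $N$. The second observation, which encodes the non-autonomous dynamical structure, is that $E = F_N^{-1}(E^{(N)})$ for every $N$, where
\[
E^{(N)} := \limsup_{n \geq N}\,(f_n \circ \cdots \circ f_{N+1})^{-1}(I_n),
\]
and $m(E^{(N)}) = m(E)$ by the measure-preserving property of $F_N$ (Lemma~\ref{lem:Lowner}). The shifted sequence $(f_{N+1},f_{N+2},\ldots)$ is still contracting, so Pommerenke's mixing is available after any such tail-shift.

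Given these observations, the dichotomy follows from a Kolmogorov-style $0$--$1$ argument: assuming $0 < m(E) < 1$, I would $L^1$-approximate $E^{(N)}$ by cylinder sets (depending on $f_{N+1},\ldots,f_M$ for some $M$) and pull these back by $F_N^{-1}$, thereby approximating $E$ in $L^1$ by events localised at times $\geq N$. Applying Pommerenke's mixing to these approximations, paired with the set $B = \partial\D \setminus E$, yields $m(E \cap B) = m(E)\,m(B) > 0$ in the limit, contradicting $E \cap B = \emptyset$.

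The main obstacle is the $L^1$-approximation step: one must show that Pommerenke's mixing is strong enough to force triviality of the tail sigma-algebra $\bigcap_N \sigma(F_n : n \geq N)$ modulo $m$. This is the standard non-autonomous analog of exactness for a strongly-mixing measure-preserving endomorphism, and I expect it to follow from careful use of Pommerenke's mixing together with the measure-preservation from $f_n(0)=0$.
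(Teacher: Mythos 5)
Your structural observation is exactly the one the paper uses: with $E=\limsup_n F_n^{-1}(I_n)$ and $E^{(N)}=\limsup_{n\ge N}(f_n\circ\cdots\circ f_{N+1})^{-1}(I_n)$ one has $E=F_N^{-1}(E^{(N)})$ for every $N$, and the whole proof reduces to a zero--one law for such pull-back-invariant families. The paper finishes in one line by citing Lemma~\ref{lem:Pomgen} (the Pommerenke contracting lemma), which asserts precisely this zero--one law for any family $L=F_n^{-1}(L_n)$; you instead set out to re-derive it, and that is where the gap lies.

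Two concrete problems with your derivation. First, the mixing statement you posit, $m(F_n^{-1}(A)\cap B)\to m(A)m(B)$ for \emph{fixed} measurable $A,B$, is weaker than what Pommerenke proves and weaker than what you need: since the set $E^{(N)}$ changes with $N$, you must control $|B\cap F_N^{-1}(E^{(N)})|$ for a sequence of targets varying with $N$. Pommerenke's theorem gives exactly this, namely $|A\cap F_n^{-1}(E_n)|/|E_n|\to|A|/(2\pi)$ for arcs $A$, uniformly over measurable sets $E_n$ of positive measure; with that version no approximation is needed at all, since taking $E_n=E^{(n)}$ and using $|E^{(n)}|=|E|$ (L\"owner's lemma) yields $|A\cap E|=|A|\,|E|/(2\pi)$ for every arc $A$, whence $|E|\in\{0,2\pi\}$ by Lebesgue density. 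Second, the step you yourself flag as the ``main obstacle'' --- approximating $E^{(N)}$ in $L^1$ by cylinder sets and deducing triviality of the tail $\sigma$-algebra --- is not carried out, and it is unclear what a cylinder set should mean here: there is no product structure on $\partial\D$, the algebras $F_n^{-1}(\mathcal{B})$ are decreasing rather than independent, and the relevant property is an exactness statement, not a Kolmogorov tail argument. So the proposal has the right skeleton but does not close; either quote the contracting lemma as the paper does, or use the uniform-in-$E_n$ form of Pommerenke's mixing directly.
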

We give examples in Section~\ref{Examples} which show that this dichotomy does not hold if we omit the contracting hypothesis.

Theorem~B tells us that whenever $(I_n)$ is a shrinking target and $(F_n)$ is a contracting forward composition of centred inner functions, then either $(F_n(\zeta))$ hits $(I_n)$ almost always or $(F_n(\zeta))$ hits $(I_n)$ almost never, and Theorem~A tells us that the latter is the case whenever $\sum_{n=1}^\infty |I_n|<\infty$.

Therefore, when studying shrinking targets for which $\sum_{n=1}^\infty |I_n|=\infty$, it makes sense to deal with forward compositions of centred inner functions that are contracting. We start with the {\em uniformly contracting} case where there is a uniform bound on $|f'_n(0)|$, as is the case for the iteration of a single inner function in Theorem~\ref{FernThm2}. We prove that such forward compositions almost always hit {\em any} shrinking target $(I_n)$ such that $\sum_{n=1}^\infty |I_n|=\infty$. This is a generalisation of Theorem~\ref{FernThm2}.
\begin{thmC}\label{thm:uniformcontr}
Let $f_n$ be centred inner functions with $|f_n'(0)| \leq \lambda<1$ for $n \in \N$, and let $F_n = f_n \circ \cdots \circ f_1$. Then, for any shrinking target $(I_n)$ in $\partial \D$ that satisfies
\begin{equation}\label{Indiv}
\sum _{n=1}^{\infty} |I_n|= \infty,
\end{equation}
the sequence $(F_n(\zeta))$ hits $(I_n)$ for almost every $\zeta\in\partial \D$.
\end{thmC}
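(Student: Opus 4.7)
The plan is to adapt the strategy used by Fern\'andez, Meli\'an and Pestana for a single function to the non-autonomous setting, combining the measure-preserving property of centred inner functions with a Pommerenke-type strong mixing estimate and a strong form of the second Borel--Cantelli lemma, exploiting the uniform contraction $|f_n'(0)|\le \lambda<1$ to furnish the necessary quasi-independence.

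First I would set $E_n:=F_n^{-1}(I_n)\subset\partial\D$ and write $m$ for normalised Lebesgue measure on $\partial\D$. Since each $f_k$ is centred, each $F_n$ is a centred inner function and its radial boundary extension preserves $m$, so $m(E_n)=m(I_n)$, and \eqref{Indiv} reads $\sum_n m(E_n)=\infty$. The target is hit by $F_n(\zeta)$ precisely when $\zeta\in\limsup_n E_n$, so by the zero--one law of Theorem~B it is enough to show $m(\limsup_n E_n)>0$. For this I would use the Kochen--Stone inequality
\[
m\!\left(\limsup_{n\to\infty} E_n\right) \;\ge\; \limsup_{N\to\infty} \frac{\big(\sum_{n=1}^N m(E_n)\big)^{2}}{\sum_{n,k=1}^N m(E_n\cap E_k)},
\]
so the proof reduces to estimating the correlations $m(E_n\cap E_k)$ for $n<k$.

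For such $n<k$, let $G_{n,k}:=f_k\circ\cdots\circ f_{n+1}$, which is centred with $|G_{n,k}'(0)|\le \lambda^{k-n}$ by the chain rule. Since $F_k=G_{n,k}\circ F_n$, measure preservation of $F_n$ gives
\[
m(E_n\cap E_k) \;=\; m\!\left(I_n\cap G_{n,k}^{-1}(I_k)\right).
\]
Here I would invoke Pommerenke's quantitative strong mixing estimate for centred inner functions (the same tool underlying Theorem~\ref{FernThm2}, to be recalled in Section~\ref{Prelims}), which yields a bound of the form
\[
\bigl|m\!\left(I_n\cap G_{n,k}^{-1}(I_k)\right) - m(I_n)\,m(I_k)\bigr| \;\le\; C\,\lambda^{\alpha(k-n)}\,\sqrt{m(I_n)\,m(I_k)}
\]
for some constants $C>0$ and $\alpha>0$; the essential feature is the geometric decay of correlations in the gap $k-n$, forced by $|G_{n,k}'(0)|\le \lambda^{k-n}$.

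The main obstacle is then the summation. One must check that the denominator of the Kochen--Stone ratio is bounded by $(1+o(1))\big(\sum_{n\le N}m(E_n)\big)^2$. Using $\sqrt{m(I_n)m(I_k)}\le \tfrac{1}{2}(m(I_n)+m(I_k))$, the off-diagonal error telescopes, via the geometric factor $\lambda^{\alpha(k-n)}$, to order $\sum_{n\le N}m(E_n)$, and the diagonal $m(E_n)$ terms contribute the same order; both are of lower order than $\big(\sum_{n\le N}m(E_n)\big)^2$ since $\sum_n m(E_n)\to\infty$ by hypothesis. Consequently the Kochen--Stone ratio tends to $1$, giving $m(\limsup_n E_n)\ge 1$, and then Theorem~B promotes this to full measure, so $F_n(\zeta)\in I_n$ infinitely often for almost every $\zeta\in\partial\D$, as required.
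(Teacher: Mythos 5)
Your proposal is correct and follows essentially the same route as the paper: set $E_n=F_n^{-1}(I_n)$, use L\"owner's lemma to identify $|E_n|=|I_n|$, factor $F_k=(f_k\circ\cdots\circ f_{n+1})\circ F_n$ so that measure preservation reduces the correlation to $|I_n\cap (f_{n+1}^{-1}\circ\cdots\circ f_k^{-1})(I_k)|$, apply Pommerenke's uniform contracting lemma to that block to get quasi-independence with geometrically decaying error, and conclude by a quantitative second Borel--Cantelli argument. The only differences are minor: the paper closes with Philipp's strong Borel--Cantelli lemma, which yields full measure directly without needing the zero--one law of Theorem~B, and the actual Pommerenke error term is $K|I_k|e^{-cp}$ rather than the $C\lambda^{\alpha(k-n)}\sqrt{m(I_n)m(I_k)}$ form you posit --- but either version sums to $O\bigl(\sum_{n\le N}m(E_n)\bigr)$, so your Kochen--Stone computation goes through unchanged.
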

The uniform bound on $|f'_n(0)|$ in the hypothesis of Theorem~C allows us to give a similar proof to that of Theorem~\ref{FernThm2} in \cite{Fern}. Both proofs rely on the strong mixing result of Pommerenke,  mentioned earlier, though in our proof we appeal to a version of the second Borel--Cantelli lemma due to Philipp rather than the Paley--Zygmund inequality used in \cite{Fern}.

Theorem~C is best possible, in the sense that if $\lambda_n\to 1^-$ as $n\to\infty$  in such a way that $\lambda_n\ldots \lambda_1 \to 0$ as $n\to \infty$, then there is a shrinking target $(I_n)$ that satisfies \eqref{Indiv} and a (contracting) composition of centred inner functions $(f_n)$ such that $|f'_n(0)|=\lambda_n$ for $n\in \N$, and $F_n=f_n\circ\cdots\circ f_1$ fails to hit $(I_n)$; see Example~\ref{ex015}.  In particular, it is not the case that any contracting composition of centred inner functions must almost always hit any shrinking target for which $\sum _{n=1}^{\infty} |I_n|= \infty$.

 The following more general result suggests that the smaller a shrinking target is, with $\sum_{n=1}^\infty |I_n|=\infty$, the greater the amount of contraction that is required to hit the target.

\begin{thmD} \label{thm:contracting}
For $n\in \N$, let $f_n$ be centred inner functions $|f_n'(0)| =\lambda_n=1-\mu _n$, and let $F_n = f_n \circ \cdots \circ f_1$. Then, for any shrinking target $(I_n)$ in $\partial \D$ that satisfies
\begin{equation}\label{eq:muI1}
\sum_{n=1}^\infty \mu_n|I_n|=\infty,
\end{equation}
the sequence $(F_n(\zeta))$ hits $(I_n)$ for almost all $\zeta\in \partial\D$.
\end{thmD}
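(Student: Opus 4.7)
The plan is to follow the broad strategy of Theorem~C (Pommerenke's strong mixing plus Philipp's version of the second Borel--Cantelli lemma), exploiting the weighted divergence $\sum_n \mu_n |I_n| = \infty$ through a blocking-and-subsequence argument. First, since $|I_n|\le 2\pi$, the hypothesis forces $\sum_n \mu_n = \infty$ and hence $\prod_n \lambda_n = 0$, so by \eqref{eq:contracting} the sequence $(F_n)$ is contracting and Theorem~B applies. Writing $A_n := F_n^{-1}(I_n)$, which has measure $|A_n|=|I_n|$ since $F_n$ is measure-preserving, it suffices to show that $|\limsup_n A_n| > 0$.

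The key is to extract a subsequence along which the argument of Theorem~C works essentially uniformly. Partition $\N$ into consecutive blocks $B_k = \{b_{k-1}+1, \ldots, b_k\}$ (with $b_0 = 0$), where $b_k$ is the smallest integer $> b_{k-1}$ with $\sum_{j=b_{k-1}+1}^{b_k} \mu_j \ge 1$. Since each $\mu_j\le 1$, each block $\mu$-sum lies in $[1,2]$, which gives $\prod_{j\in B_k}\lambda_j \le e^{-1}$. Within each $B_k$, choose $n_k$ to maximise $|I_{n_k}|$; then
\[
\sum_{j\in B_k}\mu_j|I_j| \le |I_{n_k}|\sum_{j\in B_k}\mu_j \le 2|I_{n_k}|,
\]
and summing over $k$ yields $\sum_k|I_{n_k}| \ge \tfrac12\sum_n\mu_n|I_n| = \infty$.

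For $k < k'$, the composition $H_{k,k'} := f_{n_{k'}}\circ\cdots\circ f_{n_k+1}$ is a centred inner function with $|H_{k,k'}'(0)| = \prod_{j=n_k+1}^{n_{k'}}\lambda_j$, and since the index range contains the full blocks $B_{k+1},\ldots,B_{k'-1}$, we have $|H_{k,k'}'(0)|\le e^{-(k'-k-1)}$. Because $F_{n_k}$ is measure-preserving,
\[
|A_{n_k}\cap A_{n_{k'}}| = |I_{n_k}\cap H_{k,k'}^{-1}(I_{n_{k'}})|,
\]
and Pommerenke's strong mixing estimate applied to $H_{k,k'}$ then yields a covariance bound with geometric decay in $k'-k$. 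For consecutive pairs ($k'=k+1$), the contraction factor is not small, but the trivial estimate $|A_{n_k}\cap A_{n_{k+1}}|\le |I_{n_{k+1}}|$ suffices because these terms sum to $\sum_k|I_{n_k}|$. Combining these, one obtains a bound of the form
\[
\sum_{1\le k<k'\le K}\bigl[\,|A_{n_k}\cap A_{n_{k'}}| - |I_{n_k}||I_{n_{k'}}|/(2\pi)\,\bigr] \le C\sum_{k\le K}|I_{n_k}|,\qquad K\in\N,
\]
which is precisely the input required for Philipp's version of the second Borel--Cantelli lemma applied to the events $(A_{n_k})$. Since $\sum_k|I_{n_k}|=\infty$, Philipp's lemma gives $|\limsup_k A_{n_k}|>0$; as $\limsup_k A_{n_k}\subset \limsup_n A_n$, the full measure conclusion follows from the zero-one dichotomy of Theorem~B.

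The main obstacle is controlling covariances for consecutive pairs $n_k, n_{k+1}$ of the subsequence, since $\prod_{j=n_k+1}^{n_{k+1}}\lambda_j$ straddles two blocks and need not be small. The choice to maximise $|I_{n_k}|$ within each block is precisely what ensures both that $\sum_k|I_{n_k}|$ diverges (via the weighted hypothesis) and that the crude bound used for consecutive pairs is still $O(\sum_k|I_{n_k}|)$, so that Philipp's criterion is met; this is where the precise form $\sum_n \mu_n|I_n|=\infty$ enters.
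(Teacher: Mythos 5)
Your proof is correct, and at its core it is the same argument as the paper's: block the indices so that each block has $\mu$-mass in $[1,2]$, select in each block the index maximising $|I_n|$ (so that $\sum_k|I_{n_k}|\ge\tfrac12\sum_n\mu_n|I_n|=\infty$), use the fact that whole blocks contract uniformly, and finish with Pommerenke mixing, a quantitative second Borel--Cantelli lemma and the zero--one law of Theorem~B. The one genuine difference is how adjacent selected indices are handled. You keep the full subsequence $(n_k)$ and absorb the consecutive pairs, for which $\prod_{j=n_k+1}^{n_{k+1}}\lambda_j$ need not be small, into Philipp's hypothesis via the trivial bound $|A_{n_k}\cap A_{n_{k+1}}|\le|I_{n_{k+1}}|$; this is legitimate, since Philipp's condition only needs $\sum_p c_p<\infty$ and a constant $c_1$ is harmless. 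The paper instead observes that at least one of the even- or odd-indexed subseries of $\sum_k|I_{m_k}|$ diverges and passes to every other block, so that consecutive selected indices are always separated by a full block; the resulting block maps $g_k$ then satisfy $|g_k'(0)|\le e^{-1}$ uniformly and Theorem~C applies verbatim, with no need to revisit the covariance estimates. Two details you should make explicit: first, Lemma~\ref{lem:Pom3} is stated in terms of the number of uniformly contracting factors rather than the total derivative, so to extract the decay $e^{-c(k'-k-1)}$ for $H_{k,k'}$ you must regroup it as a composition of $k'-k-1$ centred inner functions each with derivative at most $e^{-1}$ at the origin, absorbing the partial head and tail blocks into adjacent full blocks; second, the covariance condition you display is the summed (Kochen--Stone) form, whereas Philipp's lemma as stated requires the pairwise bound --- but the pairwise bound does hold here, with $c_1$ a constant and $c_p=O(e^{-cp})$ for $p\ge 2$, and Philipp then gives full measure for $\limsup_k A_{n_k}$ directly, so the final appeal to Theorem~B is not strictly needed.
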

Note that condition \eqref{eq:muI1} implies that $\sum_{n=1}^\infty \mu_n=\infty$, so $(F_n)$ is contracting, and it also implies that $\sum_{n=1}^\infty |I_n|=\infty$.

%

In the proof of Theorem~D, we obtain a slightly more general sufficient condition than~\eqref{eq:muI1} which is somewhat more complicated to state and apply; see Theorem \ref{thm:contracting}. However, we will show that~\eqref{eq:muI1} is, at least when the sequence $(|I_n|)$ is decreasing, a best possible condition for determining how large a (shrinking) target needs to be in order to ensure that any given contracting forward composition of centred inner functions almost always hits the target; see Example~\ref{ex0.1}. 

\begin{rem}
Theorem~D will be deduced from Theorem~C by  arranging the functions $f_n$ in suitable consecutive blocks. Note, however, that Theorem~C is a special case of Theorem~D.
\end{rem}
 Given {\em any} contracting forward composition sequence $(F_n)$, we can find lengths $l_n$ with $\lim_{n\to\infty}l_n=0$ and $\sum_{n=1}^\infty \mu_n l_n=\infty$. Theorem~D then shows that $(F_n(\zeta))$ almost always hits any shrinking target $(I_n)$ such that $|I_n|=l_n$ for $n\in\N$. We immediately deduce the following corollary.
\begin{cor}\label{cor:denseorbits}
For $n\in\N$, let $f_n$ be centred inner functions such that $F_n = f_n \circ \cdots \circ f_1$ is contracting. Then the following equivalent properties hold:
\begin{itemize}
\item[(a)] for every $\zeta_0\in\partial\D$, we have $\liminf_{n\to\infty}|F_n(\zeta)-\zeta_0|= 0,$ for almost every $\zeta\in \partial \D$;
\item[(b)] for every arc $I\subset \partial\D$ of positive length, $F_n(\zeta)\in I$ infinitely often, for almost every $\zeta\in \partial \D$;
\item[(c)] $(F_n(\zeta))$ is dense in $\partial\D,$ for almost every $\zeta\in \partial \D$.
\end{itemize}
\end{cor}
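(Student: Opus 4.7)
The strategy is to deduce all three statements from a single application of Theorem~D, after engineering a convenient shrinking target from the contraction of $(F_n)$, and then to observe that (a), (b), (c) are equivalent by standard countable arguments.

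First I would construct lengths $l_n>0$ with $l_n\to 0$ and $\sum_n \mu_n l_n =\infty$. Since $(F_n)$ is contracting, \eqref{eq:contracting} gives $\sum_n \mu_n=\infty$, so $\N$ can be partitioned into consecutive finite blocks $B_1,B_2,\ldots$ with $\sum_{n\in B_k}\mu_n\ge 1$ for each $k$. Setting $l_n=1/k$ for $n\in B_k$, we have $l_n\to 0$ while $\sum_n \mu_n l_n\ge \sum_k 1/k=\infty$.

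To obtain (b), given an arc $I\subset\partial\D$ of positive length, choose for all $n$ large enough that $l_n\le |I|$ any subarc $I_n\subset I$ of length $l_n$, and for the finitely many remaining $n$ set $I_n=I$. Then $(I_n)$ is a shrinking target contained in $I$ with $|I_n|\ge l_n$, so $\sum_n \mu_n|I_n|=\infty$. Theorem~D then produces a full-measure set of $\zeta\in\partial\D$ for which $F_n(\zeta)\in I_n\subset I$ infinitely often, which is precisely~(b).

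The three properties are then equivalent by a routine countable argument. A dense sequence in a compact metric space meets every nonempty open set infinitely often and accumulates at every point, so (c) immediately yields (a) and (b). Conversely, let $\{U_k\}_{k\in\N}$ be a countable basis of open arcs for $\partial\D$ and $\{\zeta_j\}_{j\in\N}$ a countable dense subset of $\partial\D$; applying (b) to each $U_k$ (or (a) to each $\zeta_j$) and intersecting the countably many full-measure sets thus obtained produces a single full-measure set $E$ on which $(F_n(\zeta))$ meets every $U_k$ infinitely often (respectively accumulates at every $\zeta_j$), hence is dense in $\partial\D$. The only substantive step is the block construction of $(l_n)$, and I do not anticipate a real obstacle: once that target is in place, the corollary is essentially bookkeeping around Theorem~D.
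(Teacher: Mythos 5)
Your proof is correct and follows essentially the same route as the paper: the paper likewise deduces the corollary by choosing lengths $l_n\to 0$ with $\sum_n \mu_n l_n=\infty$ and invoking Theorem~D, treating the equivalence of (a)--(c) as immediate. Your block construction of $(l_n)$ and the countable-basis argument simply supply the details the paper leaves to the reader.
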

 Here is a summary of our results concerning which forward compositions of centred inner functions hit shrinking targets.
\begin{summ}\label{summ-shrinking}
Let $F_n=f_n\circ \dots\circ f_1, n\in \N,$ where $f_n$ are centred inner functions, and $\lambda_n$ are defined as in \eqref{lambda-n} with $\mu_n=1-\lambda_n$.
\begin{itemize}
\item[(a)]
If $\sum_{n=1}^{\infty} |I_n| < \infty$, then $(F_n(\zeta)$ fails to hit $(I_n)$ for almost every $\zeta\in \partial \D$, by Theorem~A.
\item[(b)]
If $\sum_{n=1}^{\infty} \mu_n|I_n| = \infty$, then $(F_n)$ is contracting and $(F_n(\zeta)$) hits $(I_n)$ for almost every $\zeta\in \partial\D,$ by Theorem~D.
\item[(c)]
If $\sum_{n=1}^{\infty} |I_n| =\infty$ and $\sum_{n=1}^{\infty} \mu_n|I_n| < \infty$, then examples show that either of the conclusions in cases~(a) and~(b) is possible for contracting $(F_n)$.
\end{itemize}
\end{summ}

We now return to the dichotomy for iterates of inner functions due to Aaronson~\cite{aaronson}, and also to Doering and Ma\~{n}\'{e}~\cite{doering-mane}. One version of the dichotomy is the following.

\begin{thm}[ADM dichotomy]\label{thm:DM}
Let $f: \D \to \D$ be an inner function with a Denjoy--Wolff point $p\in\overline \D$.
\begin{enumerate}
\item[\rm(a)]If \;$\sum_{n\geq 0}(1-|f^n(0)|)<\infty$,
then $p\in\partial \D$ and $\lim_{n\to \infty}f^n(\zeta)=p$ for almost every $\zeta \in \partial \D$.
\item[\rm(b)] If \;$\sum_{n\geq 0}(1-|f^n(0)|)=\infty$, then $(f^n(\zeta))$ is dense in $\partial \D$ for almost every $\zeta \in \partial \D$.
\end{enumerate}
\end{thm}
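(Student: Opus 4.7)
I would prove the two parts by quite different techniques, treating them separately.

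Part (a): The summability hypothesis forces $|f^n(0)|\to 1$, so by the Denjoy--Wolff theorem $p = \lim_n f^n(0) \in \partial\D$. For the boundary statement, I would use the identity $(f^n)_*m = P(f^n(0),\cdot)\,m$, where $P$ is the Poisson kernel and $m$ is normalised Lebesgue measure on $\partial\D$; this follows from the mean value property applied to $H\circ f^n$, where $H$ is the harmonic extension of a boundary function. For each $\epsilon>0$, set $E_n(\epsilon) := \{\zeta \in \partial\D : |f^n(\zeta) - p| \geq \epsilon\}$. A standard Poisson estimate of the form $P(z,w) \leq (1-|z|^2)/|w-z|^2$ gives $m(E_n(\epsilon)) \leq C_\epsilon (1-|f^n(0)|)$ for all $n$ large. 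The summability hypothesis together with the classical (summable) Borel--Cantelli lemma yields $m(\limsup_n E_n(\epsilon)) = 0$, and intersecting over a sequence $\epsilon_k \to 0$ gives $f^n(\zeta) \to p$ almost everywhere.

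Part (b): I would aim to reduce to Corollary~\ref{cor:denseorbits}, splitting on the location of $p$. If $p\in\D$, a M\"obius conjugation moves $p$ to $0$, making $f$ centred. Since $f$ has an interior Denjoy--Wolff point, $f$ is not an elliptic automorphism, so Schwarz's lemma gives $|f'(0)|<1$; hence $(f^n)$ is a contracting forward composition of centred inner functions in the sense of~\eqref{eq:contracting}, and Corollary~\ref{cor:denseorbits} directly yields a.e. density of orbits. If $p\in\partial\D$ there is no interior fixed point to conjugate to $0$, so I would use a \emph{moving-frame} reduction: let $\phi_n$ be the M\"obius automorphism of $\D$ with $\phi_n(0)=a_n:=f^n(0)$, and set $g_n := \phi_n^{-1}\circ f\circ \phi_{n-1}$ with $\phi_0=\mathrm{id}$. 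A short computation shows each $g_n$ is centred with $|g_n'(0)| = \lambda_n$ as in~\eqref{lambda-n}, and that $G_n := g_n\circ\cdots\circ g_1 = \phi_n^{-1}\circ f^n$. Using the Julia--Carath\'eodory theorem at the boundary Denjoy--Wolff point $p$, one verifies that $\sum(1-|a_n|)=\infty$ forces $(G_n)$ to be contracting. For any arc $J\subset\partial\D$ disjoint from $p$, the event $\{f^n(\zeta)\in J\}$ is identical to $\{G_n(\zeta)\in I_n\}$ with $I_n:=\phi_n^{-1}(J)$, and the M\"obius derivative yields $|I_n| \asymp (1-|a_n|)/\dist(a_n,J)^2$. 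Checking the divergence condition~\eqref{eq:muI1} of Theorem~D for these pulled-back targets then gives infinitely many hits in $J$ for a.e.\ $\zeta$; running over a countable base of arcs yields density.

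The principal obstacle is the $p\in\partial\D$ sub-case of part~(b): although the moving-frame reduction is natural, verifying that the pulled-back targets $I_n$ actually satisfy the divergence hypothesis of Theorem~D requires delicate hyperbolic estimates, particularly in the parabolic case where $|a_n|\to 1$ arbitrarily slowly and the Julia--Carath\'eodory boundary derivative equals $1$. A more conceptual alternative in the spirit of Aaronson would bypass the explicit moving frame by identifying $\sum(1-|f^n(0)|)=\infty$ with \emph{conservativity} of the boundary action on $(\partial\D, m)$, and then invoking the ergodicity that follows from Pommerenke's strong mixing results recalled in the preliminaries; conservativity together with ergodicity is the classical route to a.e.\ density of orbits.
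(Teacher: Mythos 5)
First, a point of orientation: the paper does not prove Theorem~\ref{thm:DM} at all; it is quoted from Aaronson and from Doering--Ma\~n\'e, so there is no internal proof to match. Judged on its own terms, your part~(a) is correct and is essentially the argument the paper gives for the more general Theorem~E: L\"owner's lemma identifies $m(f^{-n}(S))$ with harmonic measure $\omega(f^n(0),S,\D)$, the Poisson kernel bound gives $m(E_n(\epsilon))\le C_\epsilon(1-|f^n(0)|)$, and the first Borel--Cantelli lemma finishes. Your part~(b) in the elliptic case $p\in\D$ is also fine (conjugate to a centred non-rotation, apply Corollary~\ref{cor:denseorbits} or Theorem~\ref{FernThm2}).

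The boundary case of part~(b) contains a genuine gap, and it is not merely a matter of ``delicate hyperbolic estimates''. With $|I_n|\asymp 1-|a_n|$ for a fixed arc $J$ away from $p$, the divergence hypothesis \eqref{eq:muI1} of Theorem~D that you need becomes $\sum_n\mu_n(1-|f^n(0)|)=\infty$, i.e.\ exactly condition \eqref{eq:muF1} of Theorem~F, which is strictly stronger than the hypothesis $\sum_n(1-|f^n(0)|)=\infty$ of part~(b). The paper's own Example~\ref{innerfn} --- the single parabolic Blaschke product $f(z)=((z+1/3)/(1+z/3))^2$ --- has $1-f^n(0)\sim n^{-1/2}$ and $\mu_n\sim 1/n$, so $\sum_n\mu_n(1-|f^n(0)|)\sim\sum_n n^{-3/2}<\infty$ while $\sum_n(1-|f^n(0)|)=\infty$. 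Part~(b) applies to this $f$, but your pulled-back targets fail the hypothesis of Theorem~D, so the reduction provably cannot close for it; and Example~\ref{ex0.3} shows that for general forward compositions satisfying only $\sum_n(1-|F_n(0)|)=\infty$ the conclusion of (b) is actually false, so any correct proof must use the autonomy of iteration in an essential way that your moving-frame argument never does. Your fallback --- conservativity plus ergodicity --- is indeed the classical route of Aaronson and of Doering--Ma\~n\'e, but as written it simply names the two hard ingredients (that $\sum_n(1-|f^n(0)|)=\infty$ implies conservativity of the boundary map, and that the map is then ergodic/exact) without proving either; together these constitute the theorem itself. To complete the argument you would need to supply those proofs, e.g.\ via Aaronson's lower bounds on $\sum_n P(f^n(0),\zeta)$ or Doering--Ma\~n\'e's first-return construction.
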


We observe that part~(a) of the ADM dichotomy  was generalised in \cite[part~(a) of Theorem~4.1]{BMS} from iterates of inner functions to iterates of holomorphic self-maps of $\partial \D$, and  then generalised even further in \cite[Theorems A and B]{BEFRS2}; in particular, for arbitrary sequences of holomorphic self-maps of~$\D$, we have the following result.

\begin{thmE} \label{thm:intro 2}
Let $F_n, n\in \N,$  be a sequence of  holomorphic self-maps of $\mathbb{D}$ and suppose that
\begin{equation}\label{ThmB:ineqIntro}
\sum_{n=0}^{\infty} \left(1-|F_n(0)|\right) <\infty.
\end{equation}
Then  for almost all $\zeta\in \partial \D$ we have
\begin{equation}\label{eq:DW-prop}
{\rm dist}\,(F_n(\zeta), F_n(0))\to 0\;\text{ as } n\to \infty.
\end{equation}

\end{thmE}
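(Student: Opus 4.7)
The plan is to estimate $|F_n(\zeta)-F_n(0)|$ in $L^2(\partial\D)$ via the mean value property, turn this into a measure bound on an exceptional set via Chebyshev's inequality, and then obtain almost-sure convergence through the first Borel--Cantelli lemma applied to the summability hypothesis \eqref{ThmB:ineqIntro}.

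Writing $a_n=F_n(0)$, I first note that each $F_n$ is bounded holomorphic on $\D$, so by Fatou's theorem it has radial boundary values a.e.\ on $\partial\D$ with $|F_n(\zeta)|\le 1$, and the mean value identity $a_n=\tfrac{1}{2\pi}\int_0^{2\pi}F_n(e^{i\theta})\,d\theta$ extends to the boundary. Expanding $|F_n(e^{i\theta})-a_n|^2$ and integrating then yields the clean estimate
\begin{equation*}
\frac{1}{2\pi}\int_0^{2\pi}\bigl|F_n(e^{i\theta})-a_n\bigr|^2\,d\theta
= \frac{1}{2\pi}\int_0^{2\pi}|F_n(e^{i\theta})|^2\,d\theta - |a_n|^2
\le 1-|a_n|^2 \le 2\bigl(1-|a_n|\bigr).
\end{equation*}

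Fix $\epsilon>0$ and set $E_n(\epsilon)=\{\zeta\in\partial\D:|F_n(\zeta)-a_n|>\epsilon\}$. Chebyshev's inequality combined with the $L^2$ bound above gives $|E_n(\epsilon)|\le 4\pi\,\epsilon^{-2}(1-|a_n|)$, so hypothesis \eqref{ThmB:ineqIntro} yields $\sum_n|E_n(\epsilon)|<\infty$. By the Borel--Cantelli lemma, almost every $\zeta\in\partial\D$ lies in $E_n(\epsilon)$ for only finitely many $n$, so $\limsup_{n\to\infty}|F_n(\zeta)-a_n|\le \epsilon$ a.e. Intersecting these full-measure sets over a sequence $\epsilon_k\to 0$ produces a single full-measure set on which $|F_n(\zeta)-F_n(0)|\to 0$, which is \eqref{eq:DW-prop} with $\mathrm{dist}$ read as Euclidean distance.

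No step here poses a serious obstacle: the argument needs neither a compositional structure on $(F_n)$ nor an inner-function hypothesis, and rests entirely on the elementary $L^2$ identity above. The only mild points to verify are that the mean value property passes to the boundary (immediate from boundedness and Fatou's theorem) and that $\mathrm{dist}$ in \eqref{eq:DW-prop} is the Euclidean distance -- a natural reading since \eqref{ThmB:ineqIntro} forces $|a_n|\to 1$, placing both $F_n(\zeta)$ and $F_n(0)$ near $\partial\D$ and matching the Denjoy--Wolff-type conclusion in Theorem~\ref{thm:DM}(a).
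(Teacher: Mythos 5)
Your proof is correct, and it takes a genuinely different route from the paper's. The paper deduces Theorem~E from Theorem~A: it rotates to $\zeta_n=e^{i\arg F_n(0)}$, conjugates by the M\"obius maps $M_n$ of \eqref{eq:Mobius} to obtain centred maps $G_n=M_n^{-1}\circ F_n$, identifies the bad set with the set of points hitting the shrinking target $I_n=M_n^{-1}(J_n(r)^c)$, bounds $|I_n|\le C(r)(1-|F_n(0)|)$ via the distortion estimate \eqref{eq:partd1}, and finishes with L\"owner's lemma plus the first Borel--Cantelli lemma. You instead bound the exceptional sets directly by the $L^2$ identity
\begin{equation*}
\frac{1}{2\pi}\int_0^{2\pi}|F_n(e^{i\theta})-F_n(0)|^2\,d\theta=\frac{1}{2\pi}\int_0^{2\pi}|F_n(e^{i\theta})|^2\,d\theta-|F_n(0)|^2\le 1-|F_n(0)|^2,
\end{equation*}
followed by Chebyshev, Borel--Cantelli, and a diagonalisation over $\epsilon_k\to 0$; this replaces the harmonic-measure/L\"owner machinery and the M\"obius normalisation with the mean value property alone, so it is more elementary and self-contained. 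What the paper's route buys in exchange is the explicit connection between Theorem~E and the shrinking-target framework (failure to hit $(I_n)$ for the centred sequence $(G_n)$), which is the conceptual point the authors want to make and which they reuse in Theorem~F; your $L^2$ bound does not expose that structure, but as a proof of Theorem~E it is complete. The minor points you flag are indeed harmless: the mean value property passes to radial boundary values by dominated convergence since $|F_n|\le 1$, and the paper's own proof confirms that $\mathrm{dist}$ is the Euclidean distance.
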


\begin{rem}\label{BEFRS3.3}
Note that in the condition \eqref{ThmB:ineqIntro} the orbit $(F_n(0)$ can be replaced by any orbit $(F_n(z_0))$, where $z_0\in\D$. This follows from \cite[Theorem~3.3]{BEFRS2}, which shows that if one orbit converges to $\partial\D$, then all orbits converge to the boundary together at the same rate.
\end{rem}

In~\cite{BEFRS2}, we introduced the name {\em Denjoy--Wolff set} for the set of points $\zeta\in\partial \D$ that satisfy \eqref{eq:DW-prop}; with this terminology,  Theorem~E states that the Denjoy--Wolff set has full measure.

 Theorem~E can be deduced directly from Theorem~A, and we give the details  in Section~\ref{ThmEproof} for the reader's convenience and to show the connection between the failure to hit certain shrinking targets, when the self-maps of $\D$ are centred, and non-recurrent behaviour, when the orbits tend to $\partial \D$ quickly.
%

The situation is  less clear when it comes to part~(b) of Theorem~\ref{thm:DM}. In \cite{BEFRS2} we showed by example that an analogue of part~(b) cannot hold in general for sequences of holomophic self-maps of~$\D$, even for sequences of forward compositions of inner functions, but it may hold in some cases.


For iterates of an inner function, if the hypotheses of Theorem~\ref{thm:DM}, part~(b) hold, then the sequence of iterates $(f^n)$ must be contracting; see {\cite[Lemma~2.6, part~(b)]{BMS} or} the discussion in \cite[Section~8]{BEFRS2}, for example. So it seemed plausible to ask in \cite{BEFRS2} whether an analogue of Theorem~\ref{thm:DM}, part~(b) might hold for contracting forward compositions of inner functions. In fact, the situation turns out to be more complicated than expected and there are even examples of contracting forward compositions of inner functions satisfying $\sum_{n=1}^\infty(1-|F_n(0)|)=\infty$ for which the conclusion of Theorem~E holds (see Example~\ref{ex0.3}), thus answering \cite[Question 10.1]{BEFRS2} in the negative.

On the other hand, by using Theorem~B we can show that recurrent behaviour on $\partial \D$ does occur if the rates of contraction are sufficiently large compared to the sequence $(1-|F_n(0)|)$ and so obtain Theorem~F, a version of Theorem~\ref{thm:DM}, part~(b). 

%
%

\begin{thmF}
Let $F_n=f_n\circ \dots\circ f_1, n\in \N,$ where $f_n$ are inner functions. Let $\lambda_n$ be defined as in \eqref{lambda-n} and put $\mu_n=1-\lambda_n$. If
\begin{equation}\label{eq:muF1}
\sum_{n=1}^\infty \mu_n (1-|F_n(0)|)=\infty,
\end{equation}
 then $(F_n(\zeta))$ is dense in $\partial \D$ for almost every $\zeta\in\partial \D$, and   the Denjoy--Wolff set of $(F_n)$ has measure~0.
\end{thmF}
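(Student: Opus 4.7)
The plan is to reduce the problem to the centred setting and then apply Theorem~D twice, once to obtain density and once to obtain that the Denjoy--Wolff set has measure zero. For each $n\ge 1$, let $\phi_n(z)=(z-F_n(0))/(1-\overline{F_n(0)}z)$ be the Möbius self-map of $\D$ sending $F_n(0)$ to $0$, with $\phi_0=\mathrm{id}$, and set $g_n:=\phi_n\circ f_n\circ\phi_{n-1}^{-1}$. A direct check gives $g_n(0)=0$, so each $g_n$ is a centred inner function, and telescoping yields $G_n:=g_n\circ\cdots\circ g_1=\phi_n\circ F_n$ with $G_n(0)=0$. Because the hyperbolic distortion is invariant under Möbius self-maps of $\D$, the distortion of $g_n$ at $0$ equals the distortion $\lambda_n$ of $f_n$ at $F_{n-1}(0)$, so $(G_n)$ is a contracting forward composition of centred inner functions with exactly the $\mu_n$ of \eqref{eq:muF1}.

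The key geometric input is a distortion estimate for the boundary restriction of $\phi_n$: since
\[
|\phi_n'(\zeta)|=\frac{1-|F_n(0)|^2}{|1-\overline{F_n(0)}\zeta|^2}\ge\frac{1-|F_n(0)|}{4}\quad\text{for }\zeta\in\partial\D,
\]
every arc $K\subset\partial\D$ satisfies $|\phi_n(K)|\ge\tfrac{1}{4}(1-|F_n(0)|)|K|$. Because $\phi_n$ extends to a homeomorphism of $\partial\D$, we have $F_n(\zeta)\in I$ if and only if $G_n(\zeta)\in\phi_n(I)$ for any arc $I\subset\partial\D$, so targets for $(F_n)$ transport to targets for $(G_n)$ under $\phi_n$.

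For density, fix an open arc $I\subset\partial\D$ from a countable basis and set $J_n=\phi_n(I)$. The estimate gives $\sum_n\mu_n|J_n|\ge(|I|/4)\sum_n\mu_n(1-|F_n(0)|)=\infty$, so Theorem~D applied to the centred composition $(G_n)$ with target $(J_n)$ implies $F_n(\zeta)\in I$ infinitely often for a.e.\ $\zeta$; intersecting over the countable basis yields density of $(F_n(\zeta))$ in $\partial\D$ for a.e.\ $\zeta$. For the Denjoy--Wolff conclusion, fix a small $\epsilon>0$ and let $I_n=\partial\D\setminus B(F_n(0),\epsilon)$. A Poisson-kernel computation shows $|\phi_n(I_n)|\ge c_\epsilon(1-|F_n(0)|)$ uniformly in $n$: the arc $B(F_n(0),\epsilon)\cap\partial\D$ is centred at the nearest boundary point of $F_n(0)$, where $\phi_n$ expands by the factor $(1+|F_n(0)|)/(1-|F_n(0)|)$, and hence its image under $\phi_n$ fills all of $\partial\D$ except an arc of length $\asymp(1-|F_n(0)|)/\epsilon$. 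A second application of Theorem~D yields $|F_n(\zeta)-F_n(0)|>\epsilon$ infinitely often for a.e.\ $\zeta$, so the Denjoy--Wolff set has measure zero.

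The main obstacle I anticipate is that the transported targets $(\phi_n(I))$ and $(\phi_n(I_n))$ need not be shrinking in the length sense required by the verbatim statement of Theorem~D; this could fail, for example, on subsequences where $|F_n(0)|$ stays bounded away from $1$. The Summary strongly suggests that the proof underlying Theorem~D (Pommerenke's strong mixing for contracting sequences combined with Philipp's strong version of the second Borel--Cantelli lemma) applies to arbitrary targets with $\sum_n\mu_n|I_n|=\infty$; alternatively, one can replace each $J_n$ by a sub-arc $J_n^*\subset J_n$ of length $\min(|J_n|,a_n)$ with $a_n\to 0$ chosen via a standard Abel-type argument so that $\sum_n\mu_n|J_n^*|=\infty$, and apply Theorem~D to the resulting genuinely shrinking target, after which hitting $J_n^*$ i.o.\ implies hitting $J_n$ i.o.
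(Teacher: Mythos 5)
Your proposal is correct and follows essentially the same route as the paper: conjugate by the M\"obius maps sending $F_n(0)$ to $0$ to get a contracting centred composition $(G_n)$ with the same distortions $\lambda_n$, bound the length of the transported arcs from below by a constant times $1-|F_n(0)|$, and invoke Theorem~D (the paper packages the first two steps as Theorem~\ref{recurrenceshrinking}, with the length bound obtained by a chord computation rather than your derivative estimate, but these are equivalent). Two remarks in your favour: the paper's written proof of Theorem~F stops at density and does not spell out why the Denjoy--Wolff set has measure~$0$, whereas your choice of target $\partial\D\setminus B(F_n(0),\epsilon)$, whose $\phi_n$-image is an arc of length at least $\tfrac{\pi}{4}(1-|F_n(0)|)$ by your own estimate, supplies that missing step cleanly; and your worry about the targets not being shrinking is legitimate (the paper glosses over it too), with either of your fixes acceptable --- the Abel--Dini sub-arc trick, or the observation that the proofs of Theorems~C and~D never actually use $|I_n|\to 0$.
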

Once again, note that condition \eqref{eq:muF1} implies that $\sum_{n=1}^\infty (1-|F_n(0)|)=\infty$ and also that $\sum_{n=1}^\infty \mu_n=\infty$, so $(F_n)$ is contracting.

\begin{rem} 
 In   both Theorem E and Theorem~F, the orbit $(F_n(0))$ in $\D$ may tend to the boundary, and it may do so in such a way that it accumulates at every point of $\partial \D$, in which case almost all boundary orbits might be dense under $(F_n)$ just because they follow interior orbits.
To see the difference between the two situations, we can normalise the sequence $(F_n)$ by introducing rotations $(R_n)$ mapping $F_n(0)$ to $|F_n(0)|$ and define the related forward composition sequence
\[
G_n=g_n\circ\dots\circ g_1,\quad \text{where}\quad g_n=R_n \circ f_n \circ R_{n-1}^{-1},
\]
 so that  $(G_n(0))$ lies on the positive real axis and $\lim_{n\to\infty}G_n(0)= 1$.  Note that if $(F_n)$ satisfies \eqref{ThmB:ineqIntro} in Theorem E, so does $(G_n)$, and  we obtain that  $\lim_{n\to\infty}G_n(\zeta)= 1$ for almost every $\zeta\in\partial\D$, which in this case implies that  the Denjoy--Wolff set of $(G_n)$ has full measure. However, if $(F_n)$ satisfies \eqref{eq:muF1} in Theorem F, and hence $(G_n)$ does too, it follows that almost all orbits in $\partial \D$ are dense under $(G_n)$;  hence, in this case, the Denjoy--Wolff set of $(G_n)$ has measure~0. 
\end{rem}

\begin{rem}
 The summability  conditions in Theorem~E and Theorem~F are expressed in terms of  the orbit of~0 under $F_n$, but  they are
in fact independent of the initial point, with $\lambda_n=1-\mu_n$ defined to be a function of the initial point, say $z_0\in\D$, rather than~0. This holds because:
\begin{itemize}
\item by the Schwarz-Pick lemma, $F_n(z_0)$ lies in a hyperbolic disc with centre $F_n(0)$ and radius $\dist_{\D}(0,z_0)$, the hyperbolic distance from~0 to $z_0$; 
\item by \cite[Theorem~11.2]{BeardonMinda}, the hyperbolic distortion $\lambda_n(z_0)$ lies in a hyperbolic disc with centre $\lambda_n(0)$ and radius $2\dist_{\D}(0,z_0)$.
\end{itemize}
\end{rem}

We will show that \eqref{eq:muF1} is in fact a sharp condition on $(\mu_n)$ for ensuring that if $(F_n)$ is any contracting forward composition of inner functions such that $\sum_{n=1}^{\infty} (1-|F_n(0)|) = \infty$, then almost all boundary orbits for $(F_n)$ are dense in $\partial \D,$ as stated in Theorem F; 
see Example~\ref{ex0.2} and Example~\ref{ex0.3}. The latter example has the properties that
\begin{equation}\label{eq:three}
\sum_{n=1}^{\infty} (1-|F_n(0)|) = \infty,\quad \sum_{n=1}^{\infty}\mu_n (1-|F_n(0)|) < \infty, \quad \sum_{n=1}^{\infty} \mu_n = \infty,
\end{equation}
and $F_n(z)\to 1$ as $n\to\infty$, for all $z\in\D$ and almost all $z\in\partial\D$; that is, the Denjoy--Wolff set of $(F_n)$ has full measure,  the opposite conclusion to that of Theorem~F.

On the other hand, under the same conditions as in \eqref{eq:three}, there exist examples of sequences of forward compositions of inner functions such that almost all boundary orbits are dense for  $(F_n)$; see Example~\ref{innerfn}.

 Here is a summary of our extension of the ADM dichotomy to forward compositions of inner functions.
\begin{summ}\label{sum:DW-rec}Let $F_n=f_n\circ \dots\circ f_1, n\in \N,$ where $f_n, n\in \N,$ are inner functions, and $\lambda_n$ are defined as in \eqref{lambda-n} with $\mu_n=1-\lambda_n$.
\begin{itemize}
\item[(a)]
If $\sum_{n=1}^{\infty} (1-|F_n(0)|) < \infty$, then the Denjoy--Wolff set of $(F_n)$ has full measure in $\partial \D$, by Theorem~E.
\item[(b)]
If $\sum_{n=1}^{\infty} \mu_n(
1-|F_n(0)|) = \infty$, then $(F_n)$ is contracting,  almost all boundary orbits for $(F_n)$ are dense in $\partial \D$,  
and the Denjoy--Wolff set of $(F_n)$ has measure~0, by Theorem~F.
\item[(c)]
If $\sum_{n=1}^{\infty} (1-|F_n(0)|) = \infty$ and $\sum_{n=1}^{\infty} \mu_n(1-|F_n(0)|) < \infty$, then  examples show that either of the conclusions in cases~(a) and~(b) is possible for contracting $(F_n)$.
\end{itemize}
\end{summ}

\begin{rem}
We expect some of the results for forward compositions of inner functions in this paper to  have analogues for forward compositions of holomorphic maps between general simply connected domains, which is the setting used in \cite{BEFRS2}, and in particular for simply connected wandering domains of transcendental entire functions. For simplicity, we have restricted the treatment here to inner functions.
\end{rem}
Finally, our results suggest that the following question should be investigated.  For contracting forward compositions of inner functions, normalised so that the orbit of 0 lies on the positive real axis and converges to~1, is it the case that exactly one of these two cases must occur:
\begin{itemize}
\item[(a)] $F_n(\zeta)\to 1$ as $n\to \infty$ for almost all $\zeta\in\partial \D;$
\item[(b)] 
$(F_n(\zeta))$ is dense in $\partial \D$ for almost all $\zeta\in\partial \D$?
\end{itemize}
We remark that if either case~(a) or case~(b) occurs for all $\zeta$ in a subset of $\partial \D$ of positive measure, then the same case must hold for almost all $\zeta\in \D$; this follows easily from Lemma~\ref{lem:Pomgen}.

 As this paper was being finalised, we learnt of recent interesting work by Ferreira and Nicolau on forward composition sequences of centred inner functions in which they define and characterise the concept of `ergodicity' for such sequences (see~\cite{FerreiraNic}); this concept may have a role to play in investigating the above question. 

The structure of the paper is as follows. In Section~\ref{Prelims} we state known results needed in our proofs and in Section~\ref{TheoremA}, we prove Theorems~A,~B and~C. Then, in Section~\ref{nonunifresult}, we prove a result that includes Theorem~D. In Section~\ref{ThmEproof}, we prove Theorems~E and~F, and in Section~\ref{Examples} we give examples to show that our results are sharp.

\section{Preliminary lemmas}\label{Prelims}
To prove Theorems~A,~B and~C, we need several key lemmas. We start with two Borel--Cantelli lemmas, the first of which is entirely standard.

\begin{lem}[First Borel--Cantelli lemma]
Let $E_n$ be a sequence of measurable subsets of $\partial \D$, such that $\sum_{n=1}^\infty |E_n|<\infty$. Then the set $E$ of points that belong to $E_n$ for infinitely many~$n$ satisfies
\[
|E|= \left| \bigcap_{N=1}^\infty  \bigcup_{n=N}^\infty E_n\right| =0.
\]
\end{lem}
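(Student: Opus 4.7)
The plan is to use the standard monotone tail argument that is the whole content of the first Borel--Cantelli lemma; this is essentially a one-line reduction to the hypothesis on the series.

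First I would fix $N \in \N$ and observe that, by the definition of $E$ as a decreasing intersection of the sets $U_N = \bigcup_{n=N}^\infty E_n$, we have the inclusion $E \subseteq U_N$ for every $N$. Hence, by countable subadditivity of Lebesgue measure on $\partial \D$,
\[
|E| \le |U_N| = \Bigl|\bigcup_{n=N}^\infty E_n\Bigr| \le \sum_{n=N}^\infty |E_n|.
\]

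Next I would let $N \to \infty$. Since by hypothesis $\sum_{n=1}^\infty |E_n| < \infty$, the tail sums satisfy $\sum_{n=N}^\infty |E_n| \to 0$ as $N \to \infty$. Combining this with the inequality above gives $|E| = 0$, as required. There is no real obstacle here; the only point worth mentioning is that the conclusion does not use any structural property of $\partial \D$ beyond countable subadditivity of the measure, so the lemma is truly a general measure-theoretic statement applied in this particular setting.
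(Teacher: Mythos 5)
Your proof is correct and is exactly the standard tail-sum argument; the paper itself does not prove this lemma, merely citing it as "entirely standard," and your argument is the one being implicitly invoked. Nothing further is needed.
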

In the standard second Borel--Cantelli lemma, $\sum_{n=1}^\infty |E_n|=\infty$ and the probabilistic assumption of independence is required to show that almost all points lie in infinitely many of the sets~$E_n$. In our context, we use a version due to Philipp; see \cite[Theorem 3]{Philipp}.
\begin{lem}[Second Borel--Cantelli Lemma]\label{lem:Philipp}
Let $E_n$ be measurable sets in $[0,1]$. Let $A(N,x)$ be the number of $n \leq N$ such that $x \in E_n$ and let $\phi(N)= \sum_{n \leq N} |E_n|.$ Suppose there exist $c_k>0,$ with $\sum c_k < \infty,$ such that for all $n>m$
\[|E_n \cap E_m| \leq |E_n| |E_m| + |E_n| c_{n-m}.\]
Then, for $\varepsilon>0$ and almost every $x\in X$,
\[A(N,x) = \phi(N)+ O(\phi(N)^{1/2} \log \phi(N)^{3/2+ \varepsilon})\;\text{ as }N \to \infty.\]
In particular, if $\sum_{n=1}^\infty|E_n|=\infty$, then $A(N,x)\to\infty$ as $N\to\infty$ for almost every $x\in X$.
\end{lem}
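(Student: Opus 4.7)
The plan is to view $A(N,\cdot) = \sum_{n=1}^{N}\chi_{E_n}$ as a sum of indicator functions on the probability space $[0,1]$ with Lebesgue measure, control its $L^2$ deviation from the mean $\phi(N)$ using the quasi-independence hypothesis, and then upgrade this second-moment estimate to a pointwise a.e.\ rate of convergence by means of a G\'al--Koksma type maximal theorem; the $(\log\phi(N))^{3/2+\epsilon}$ factor enters at this last step.

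First I would compute
\[
\int_0^1 \bigl(A(N,x)-\phi(N)\bigr)^2\, dx = \sum_{n,m\le N}\bigl(|E_n\cap E_m|-|E_n|\,|E_m|\bigr).
\]
The diagonal terms $n=m$ contribute at most $\sum_{n\le N}|E_n|=\phi(N)$; the hypothesis $|E_n\cap E_m|\le |E_n||E_m|+|E_n|c_{n-m}$ for $n>m$ bounds the off-diagonal contribution by
\[
2\sum_{n\le N}|E_n|\sum_{k\ge 1}c_k = 2C\,\phi(N), \quad \text{where } C:=\sum_{k\ge 1}c_k<\infty.
\]
Hence $\operatorname{Var}(A(N,\cdot)) = O(\phi(N))$, and the same argument applied to any index interval $a<n\le b$ yields the analogous bound $O(\phi(b)-\phi(a))$ for the variance of the partial-sum increments.

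Second, I would apply a G\'al--Koksma type theorem: if $(f_n)$ are centred $L^2$ functions on a probability space satisfying $\int \bigl|\sum_{a<n\le b}f_n\bigr|^2\, dx \le \psi(b)-\psi(a)$ for some non-decreasing $\psi$, then $\sum_{n\le N}f_n = O\bigl(\psi(N)^{1/2}(\log\psi(N))^{3/2+\epsilon}\bigr)$ almost everywhere, for every $\epsilon>0$. Taking $f_n=\chi_{E_n}-|E_n|$ and $\psi=K\phi$ for a suitable constant $K$, the variance computation supplies the hypothesis, and the conclusion gives exactly the stated asymptotic for $A(N,x)$. The final assertion that $A(N,x)\to\infty$ a.e.\ whenever $\phi(N)\to\infty$ is then immediate, since $\phi(N)$ dominates the error term.

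The hard part is the second step: the variance bound alone yields only convergence in probability. Chebyshev combined with the first Borel--Cantelli lemma, applied along a subsequence $(N_k)$ with $\phi(N_{k+1})/\phi(N_k)$ bounded, delivers the correct rate along $(N_k)$, but bridging the gap to every $N$ requires a Rademacher--Menshov type maximal inequality for quasi-orthogonal sums; it is precisely this maximal inequality which produces the $(\log\phi(N))^{3/2+\epsilon}$ factor in the G\'al--Koksma bound. Verifying (or quoting) this maximal inequality in the present non-orthogonal setting, where the correlations $|E_n\cap E_m|-|E_n||E_m|$ are only controlled in one direction, is the genuine obstacle, and is the reason one typically cites Philipp's paper rather than reproving the estimate from scratch.
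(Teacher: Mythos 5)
The paper does not prove this lemma at all: it is quoted verbatim from Philipp \cite[Theorem 3]{Philipp}, so there is no in-paper proof to compare against. Your sketch is nevertheless a faithful reconstruction of how the cited result is actually proved. The variance computation is correct: the diagonal contributes at most $\phi(N)$, and the one-sided correlation hypothesis, symmetrised over the pairs $(n,m)$ and $(m,n)$, bounds the off-diagonal part by $2\phi(N)\sum_k c_k$, and the same bound $O(\phi(b)-\phi(a))$ holds for increments, which is exactly the input needed for the G\'al--Koksma theorem with $\psi=(1+2C)\phi$. That theorem (variance bound for partial-sum increments $\Rightarrow$ a.e.\ bound $O(\psi(N)^{1/2}(\log\psi(N))^{3/2+\epsilon})$, proved via a Rademacher--Menshov style dyadic maximal inequality) is precisely the engine in Philipp's argument, and you correctly identify it as the only nontrivial step and the source of the logarithmic factor. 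Quoting G\'al--Koksma rather than reproving it leaves your argument no less self-contained than the paper, which cites Philipp's theorem wholesale; the only cosmetic caveat is that the displayed asymptotic is vacuous unless $\phi(N)\to\infty$, in which case your final deduction that $A(N,x)\to\infty$ a.e.\ is immediate, as you say.
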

In relation to inner functions, we need the following ergodic theory result of Pommerenke.
\begin{lem}[Pommerenke contracting lemma]\label{lem:Pomgen}
Let $(F_n)$ be a sequence of inner functions and suppose that $(F_n)$ is contracting.

If there are measurable subsets $L$ and $L_n$, $n\in\N$, of $\partial \D$, such that $L= F_n^{-1}(L_n)$, for $n\in\N$, up to a set of measure~0, then $L$ has either full or zero measure with respect to $\partial\D$.
\end{lem}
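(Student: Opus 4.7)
The plan is to show directly that the Poisson extension $u(z):=\omega(z,L,\D)$ is constant on $\D$, and then deduce from the uniqueness of the Poisson representation that $\chi_L$ must be almost everywhere constant on $\partial\D$, which forces $L$ to have either zero or full measure.

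The first step is to establish the functional identity $u=u_n\circ F_n$ on $\D$ for each $n$, where $u_n(w):=\omega(w,L_n,\D)$. Since $u_n\circ F_n$ is a bounded harmonic function on $\D$, it is uniquely determined by its a.e.\ radial boundary values. Because $F_n$ is inner, its radial boundary extension sends a.e.\ $\zeta\in\partial\D$ into $\partial\D$, and Löwner's lemma ensures that the pushforward of Lebesgue measure on $\partial\D$ under $F_n$ is absolutely continuous with respect to Lebesgue measure. Combining this with Fatou's theorem applied to $u_n$ shows that the radial boundary values of $u_n\circ F_n$ equal $\chi_{L_n}(F_n(\zeta))$ for a.e.\ $\zeta$; by hypothesis these agree with $\chi_L(\zeta)$ a.e., so the uniqueness of bounded harmonic extensions gives $u=u_n\circ F_n$ on $\D$.

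The second step uses the contractivity hypothesis. Fix any $z,w\in\D$. Since $u_n$ takes values in $[0,1]$, the standard gradient estimate $(1-|\zeta|^2)|\nabla u_n(\zeta)|\leq C$ shows that $u_n$ is Lipschitz with respect to the hyperbolic metric on $\D$, with a universal constant $C$ independent of $n$. Contractivity of $(F_n)$ gives $\dist_{\D}(F_n(z),F_n(w))\to 0$ as $n\to\infty$, hence
\[
|u(z)-u(w)|=|u_n(F_n(z))-u_n(F_n(w))|\leq C\,\dist_{\D}(F_n(z),F_n(w))\to 0.
\]
Since the left-hand side does not depend on $n$, we conclude $u(z)=u(w)$. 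So $u\equiv c$ on $\D$ for some $c\in[0,1]$, and by Fatou's theorem its radial boundary values equal $\chi_L$ almost everywhere; hence $\chi_L=c$ a.e.\ on $\partial\D$, forcing $c\in\{0,1\}$ and $L$ to have either zero or full measure in $\partial\D$.

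I expect the only delicate point to be the claim in the first step that $u_n\circ F_n$ has the boundary values $\chi_{L_n}\circ F_n$ almost everywhere; this is precisely where the inner-function hypothesis enters, via Löwner's lemma, since without absolute continuity of the pushforward one could not guarantee that $F_n(\zeta)$ lands in the full-measure set on which $u_n$ has the correct radial limit. The remaining ingredients are routine consequences of Fatou's theorem and of the standard Lipschitz estimate for bounded harmonic functions with respect to the hyperbolic metric.
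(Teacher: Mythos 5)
Your proof is correct, and it is genuinely more self-contained than the paper's treatment: the paper does not prove this lemma at all, but derives it from Pommerenke's strong mixing theorem (citing \cite{pommerenke-ergodic} and \cite[Theorem~7.4]{BEFRS2}), whereas you argue directly that the harmonic measure $u(z)=\omega(z,L,\D)$ is constant. Two remarks. First, your opening step is exactly the equality case of L\"owner's lemma as stated in the paper (Lemma~\ref{lem:Lowner}): taking $f=F_n$ and $S=L_n$ gives $\omega(z,F_n^{-1}(L_n),\D)=\omega(F_n(z),L_n,\D)$, and since $L$ and $F_n^{-1}(L_n)$ differ by a null set their harmonic measures coincide, so $u=u_n\circ F_n$ is immediate. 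Your re-derivation via Fatou's theorem quietly skates over the classical subtlety that the curve $r\mapsto F_n(r\zeta)$ need not approach $F_n(\zeta)$ nontangentially, so one cannot simply compose radial limits; this is harmless here only because the identity you want is precisely L\"owner's lemma, which you should invoke directly rather than reprove. Second, your contraction step is sound: the harmonic Schwarz lemma (or Harnack's inequality applied to $u_n$ and $1-u_n$) gives a universal bound $(1-|w|^2)|\nabla u_n(w)|\le C$ for harmonic functions into $[0,1]$, hence a hyperbolic Lipschitz constant independent of $n$, and contractivity then forces $u(z)=u(w)$ for all $z,w\in\D$; constancy of $u$ together with Fatou's theorem yields $\chi_L$ constant a.e., hence $|L|\in\{0,2\pi\}$. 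What your route buys is a short, elementary proof of the zero--one law that avoids quantitative mixing entirely; what the paper's route buys is the quantitative mixing estimate itself (Lemma~\ref{lem:Pom3}), which is needed elsewhere (in the proof of Theorem~C) and of which this lemma is a soft corollary.
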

Lemma~\ref{lem:Pomgen} follows from a strong mixing theorem due to Pommerenke; see \cite[Theorem~1 and its discussion]{pommerenke-ergodic} and also \cite[Theorem~7.4]{BEFRS2}.

We need another mixing result, also given by Pommerenke \cite[Lemma 3]{pommerenke-ergodic}, concerning centred inner functions that satisfy a uniform contraction condition.
\begin{lem}[Pommerenke uniform contracting lemma] \label{lem:Pom3}
Let $f_n$ be centred inner functions with $|f_n'(0)| \leq \lambda<1$ for all $n \in \N$, and let $F_n = f_n \circ \cdots \circ f_1$. Then, for some absolute $K>0$ we have that
\[ \left| \frac{|A \cap F_n^{-1}(E)|}{|E|}- \frac{|A|}{2\pi}\right| \leq K \exp \left(- \frac{1-\lambda}{84}n\right),\quad\text{for }n\in \N,  \]
for all arcs $A$ and measurable sets $E$ in $\partial \D$ with $|E|>0$.
\end{lem}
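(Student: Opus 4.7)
The approach is to treat $F_n = f_n\circ\cdots\circ f_1$ as a single centred inner function (so its radial boundary extension preserves Lebesgue measure on $\partial\D$, and $|F_n'(0)|\le \lambda^n$), and bound the deviation
\[
|A\cap F_n^{-1}(E)| - \frac{|A|\,|E|}{2\pi}
\]
by a geometric decay driven by the hyperbolic contraction at $0$. Measure preservation lets us rewrite this deviation as
\[
\int_{\partial\D}\Bigl(\chi_A - \tfrac{|A|}{2\pi}\Bigr)\,(\chi_E\circ F_n)\,|d\zeta|,
\]
so that, setting $\psi_A := \chi_A - |A|/(2\pi)$ (a mean-zero function with $\|\psi_A\|_\infty\le 1$) and introducing the transfer operator $U_{F_n}$ adjoint to $g\mapsto g\circ F_n$, the expression becomes $\int (U_{F_n}\psi_A)\,\chi_E\,|d\zeta|$, bounded in absolute value by $\|U_{F_n}\psi_A\|_\infty\,|E|$. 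Dividing by $|E|$, the whole claim reduces to the operator estimate $\|U_{F_n}\psi_A\|_\infty \le K\exp(-(1-\lambda)n/84)$.

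Because $U_{F_n} = U_{f_n}\circ\cdots\circ U_{f_1}$, and because each $U_{f_k}$ preserves the mean-zero subspace (since $f_k(0)=0$ forces $\int U_{f_k}\psi\,|d\zeta| = \int \psi\cdot(1\circ f_k)\,|d\zeta| = \int \psi\,|d\zeta|$), this operator estimate reduces to iteration of a single-step contraction: for any centred inner function $f$ with $|f'(0)|\le \lambda$ and any mean-zero $\psi\in L^\infty(\partial\D)$,
\[
\|U_f\psi\|_\infty \le \exp\!\bigl(-(1-\lambda)/84\bigr)\,\|\psi\|_\infty.
\]
I would prove this single-step estimate by representing $U_f\psi(\zeta)$ as a harmonic-measure weighted average of the values of $\psi$ over the preimage tree of $\zeta$ under the boundary map of $f$, and then invoking the Schwarz--Pick lemma: since $f$ contracts hyperbolic distances at $0$ by a factor at most $\lambda$, the weights in this average stay close to normalised Lebesgue measure, with a quantitative deviation controlled by $1-\lambda$. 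The cancellation from $\int\psi=0$ converts this closeness of weights into the uniform operator contraction.

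The main obstacle is extracting the explicit constant $84$ in the exponent. This requires a careful Poisson-kernel computation combined with Koebe-type distortion estimates for the boundary preimages of $f$, balancing how much mass $U_f\psi$ can concentrate at a single point of $\partial\D$ against the mean-zero cancellation of $\psi$. The numerical value $84$ is not structurally meaningful but arises from bookkeeping when the single-step rate $1-\lambda$ is propagated through the distortion inequalities; any sharper constant would require separating the regimes $\lambda\to 0$ and $\lambda\to 1$. Once the single-step bound is in place, composition with $U_{f_{n-1}},\ldots,U_{f_1}$ yields the exponential decay at the claimed rate, and the lemma follows by collecting the estimates above into a single absolute constant $K$.
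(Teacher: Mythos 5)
The paper does not prove this lemma at all: it is quoted verbatim from Pommerenke (\cite[Lemma~3]{pommerenke-ergodic}), so there is no internal proof to compare against. Judged on its own terms, your argument has a genuine gap at its core. The formal reduction is fine: the deviation does equal $\int\psi_A\,(\chi_E\circ F_n)=\int(U_{F_n}\psi_A)\chi_E$, the factorisation $U_{F_n}=U_{f_n}\circ\cdots\circ U_{f_1}$ is correct, and each $U_{f_k}$ does preserve mean-zero functions. But the single-step estimate you reduce everything to, namely $\|U_f\psi\|_\infty\le e^{-(1-\lambda)/84}\|\psi\|_\infty$ for every mean-zero $\psi\in L^\infty$, is false. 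Take $f(z)=z^2$ (so $|f'(0)|=0$ and the hypothesis holds with any $\lambda$), let $A'$ be the upper half-circle and $\psi=\chi_{f^{-1}(A')}-\tfrac12$. Then $\psi$ has mean zero and $U_f\psi=\chi_{A'}-\tfrac12$, so $\|U_f\psi\|_\infty=\|\psi\|_\infty$: no contraction whatsoever. The underlying reason is that $U_f\psi(\zeta)$ is an average of $\psi$ over the Aleksandrov--Clark measure of $\zeta$, which for a finite Blaschke product is a sum of finitely many point masses and is nowhere near Lebesgue measure; the global cancellation $\int\psi=0$ gives no pointwise control over such an average. The same mechanism shows there is no single-step contraction on mean-zero functions in $L^1$ or $L^2$ either (functions in the range of $g\mapsto g\circ f$ are fixed in norm).

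The deeper problem is that your reduction discards the hypothesis that $A$ is an \emph{arc}, and the lemma is simply false for general measurable $A$: with $f_k(z)=z^2$ for all $k$, $A=F_n^{-1}(A')$ and $E=A'$, one gets $|A\cap F_n^{-1}(E)|/|E|-|A|/(2\pi)=\tfrac12$ for every $n$. So no estimate on $\|U_{F_n}\psi\|_\infty$ over all mean-zero $\psi$ (or even all $\psi$ of the form $\chi_S-|S|/(2\pi)$) can yield the lemma; any proof must exploit the low complexity of the arc $A$. Pommerenke's argument does exactly this: he passes to the harmonic extension $\omega_n=\omega(F_n(\cdot),E,\D)$, writes the deviation as $\tfrac{1}{2\pi}\int_A(\omega_n^*-\omega_n(0))\,d\theta$, and obtains geometric decay of the relevant Fourier/Taylor coefficients of $\omega_n$ under composition with each centred $f_k$ (this is where $|f_k'(0)|\le\lambda$ and the constant $84$ enter), then converts coefficient decay into an estimate on $\int_A$ using that $A$ is an interval. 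If you want to reprove the lemma rather than cite it, you should abandon the $L^\infty$ spectral-gap framework and follow a route of that type, in which the quantity that contracts is a weak (oscillation/coefficient) norm for which indicator functions of arcs are well behaved.
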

We remark that it is possible (and useful) to allow the sets $E$ and $A$ in Lemma~\ref{lem:Pom3} to vary depending on~$n$ when applying Lemma~\ref{lem:Pom3}.

Finally, we need L\"owner's lemma, which can be found in \cite[Proposition 4.15]{Pommerenke} and \cite[Theorem 7.1.8 and Proposition 7.1.4 part~(4)]{BracciContrerasDM}. The case of equality for inner functions can be found in \cite[Corollary~1.5(b)]{doering-mane}, for example. Here we denote the harmonic measure of a Borel set~$A$ in the boundary of a domain~$U$ by $\omega(z, A, U)$.
\begin{lem}[L\"owner's lemma] \label{lem:Lowner}
Let~$f$ be a holomorphic self-map of~$\D$  and let $S\subset \partial \D$ be a Borel set. Then
\begin{equation}\label{eqtn:Ransford Phil}
\omega(z, f^{-1}(S), \D)\leq \omega(f(z), S,\D),\;\text{ for } z\in\D,
\end{equation}
with equality if $f$ is an inner function.
\end{lem}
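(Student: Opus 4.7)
The plan is to use the Poisson integral representation of harmonic measure together with the boundary behaviour of bounded harmonic functions. Set $u(w):=\omega(w,S,\D)$; this is the unique bounded harmonic function on $\D$ whose radial boundary values coincide with $\chi_S$ almost everywhere on $\partial\D$. Then $v:=u\circ f$ is harmonic on $\D$ and bounded by $1$, so $v$ equals the Poisson integral of its own radial boundary function $v^*$.

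The key step is to establish the pointwise inequality
\[
v^*(\zeta)\geq \chi_{f^{-1}(S)}(\zeta)\quad\text{for a.e.\ }\zeta\in\partial\D,
\]
with equality when $f$ is inner. Since $f$ is a bounded holomorphic function, its radial limit $f^*(\zeta)\in\overline{\D}$ exists for a.e.\ $\zeta\in\partial\D$ by Fatou's theorem. If $|f^*(\zeta)|<1$, then $v$ extends continuously at $\zeta$ with value $u(f^*(\zeta))\in[0,1]$, while $\zeta\notin f^{-1}(S)$, so the inequality reduces to $v^*(\zeta)\geq 0$, which is trivial. If $|f^*(\zeta)|=1$, one wants to identify $v^*(\zeta)$ with $u^*(f^*(\zeta))=\chi_S(f^*(\zeta))=\chi_{f^{-1}(S)}(\zeta)$. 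This identification comes from a composition principle for non-tangential limits: for almost every such $\zeta$, the curve $t\mapsto f(t\zeta)$ approaches $f^*(\zeta)$ non-tangentially, and then Fatou's theorem applied to $u$ at $f^*(\zeta)$ yields the claimed identification.

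Integrating this pointwise inequality against the Poisson kernel at $z$, and using that $v$ is its own Poisson integral, gives
\[
u(f(z))=v(z)\geq \int_{\partial\D}\chi_{f^{-1}(S)}(\zeta)\,P_z(\zeta)\,\frac{|d\zeta|}{2\pi}=\omega(z,f^{-1}(S),\D),
\]
which is the desired inequality. When $f$ is an inner function, $|f^*(\zeta)|=1$ for a.e.\ $\zeta\in\partial\D$, so only the second case occurs in the dichotomy above, the pointwise inequality becomes equality, and so does the integrated version.

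The main obstacle is the non-tangential approach step on $\{|f^*|=1\}$: it is not automatic that $f(t\zeta)\to f^*(\zeta)$ non-tangentially, and some form of Julia--Carath\'eodory-type control is needed before Fatou's theorem can be applied to $u$ at $f^*(\zeta)$. A cleaner alternative is to first prove the inequality when $S$ is a finite union of open arcs, so that $u_S$ extends continuously to $\partial\D\setminus\partial S$ and hence $v$ is itself continuous at every $\zeta$ with $f^*(\zeta)\in\partial\D\setminus\partial S$; the general Borel case then follows by inner and outer regularity of harmonic measure combined with monotone convergence.
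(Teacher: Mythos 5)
The paper itself offers no proof of this lemma---it simply cites \cite[Proposition 4.15]{Pommerenke}, \cite{BracciContrerasDM} and \cite{doering-mane}---so your argument has to stand on its own. As written, your primary route has a genuine gap exactly where you flag it: the claim that for a.e.\ $\zeta$ with $|f^*(\zeta)|=1$ the curve $t\mapsto f(t\zeta)$ tends to $f^*(\zeta)$ \emph{non-tangentially} is not a standard fact and is not available in general. By the Julia--Carath\'eodory theorem, Stolz angles map into Stolz angles precisely where $f$ has a finite angular derivative, and that set can be null even inside $\{|f^*|=1\}$ (e.g.\ for singular inner functions); without it, knowing that $u$ has a non-tangential limit at $f^*(\zeta)$ says nothing about its limit along a possibly tangential image curve. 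There is a second, quieter problem in the same step: $u=P[\chi_S]$ has non-tangential limit $\chi_S$ only off a Lebesgue-null set $N$, and proving that $\{\zeta: f^*(\zeta)\in N\}$ is itself null is essentially L\"owner's lemma, so it cannot be assumed.

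Your ``cleaner alternative'' avoids both problems and is the correct proof; it should be the main argument rather than a remark. For $S$ a finite union of open arcs, $u_S=P[\chi_S]$ extends continuously to $\overline{\D}$ off the finite endpoint set $\partial S$, so $v(t\zeta)=u_S(f(t\zeta))\to\chi_S(f^*(\zeta))$ whenever $f^*(\zeta)\in\partial\D\setminus\partial S$, with no control on the mode of approach required; and when $|f^*(\zeta)|<1$ one gets $v^*(\zeta)=u_S(f^*(\zeta))\ge 0=\chi_{f^{-1}(S)}(\zeta)$. Two points still need to be recorded: that $\{\zeta: f^*(\zeta)\in\partial S\}$ is null, which follows from the Riesz uniqueness theorem because $f^*$ cannot equal a unimodular constant on a set of positive measure unless $f$ is that constant; and that a bounded harmonic function on $\D$ is the Poisson integral of its radial boundary values. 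Integrating $v^*\ge\chi_{f^{-1}(S)}$ (with equality a.e.\ when $f$ is inner, since then $|f^*|=1$ a.e.) against $P_z$ then gives the lemma for finite unions of open arcs. For general Borel $S$, countable additivity plus outer regularity of $\omega(f(z),\cdot,\D)$ yields the inequality, and for the equality case the two finite Borel measures $S\mapsto\omega(f(z),S,\D)$ and $S\mapsto\omega(z,f^{-1}(S),\D)$ agree on the $\pi$-system of finite unions of open arcs and hence on all Borel sets. With these additions your alternative route is complete and is essentially the textbook proof behind the references cited in the paper.
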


\section{Proofs of Theorems~A,~B and~C}\label{TheoremA}
We start by proving Theorem~A, which states that if $(F_n)$  is a sequence of holomorphic self-maps of $\mathbb{D}$ such that $F_n(0)=0$ for $n\in \N$ and $(I_n)$ is a shrinking target in $\partial\D$ such that
\[
\sum_{n=1}^\infty |I_n|<\infty,
\]
then $(F_n(\zeta))$ fails to hit $(I_n)$ for almost all $\zeta\in\partial\D$. This is a straightforward consequence of L\"owner's lemma and the first Borel--Cantelli lemma.
\begin{proof}[Proof of Theorem~A]
By L\"owner's lemma, we have $|F_n^{-1}(I_n)|\le|I_n|,$ for $n\in \N$, so
\[
\sum_{n=1}^\infty|F_n^{-1}(I_n)|\le\sum_{n=1}^\infty|I_n|<\infty.
\]
Thus the set of points that lie in infinitely many of the sets $E_n:=F_n^{-1}(I_n)$ has measure~0 by the first Borel--Cantelli lemma.
\end{proof}
Next, Theorem~B states that if $F_n=f_n\circ \cdots \circ f_1$, $n\in \N$, is a contracting forward composition of centred inner functions and $(I_n)$ is any shrinking target, then \[
\{\zeta\in\partial \D: (F_n(\zeta)) \text{ hits } (I_n)\}
\]
has full or zero measure in $\partial \D$.
\begin{proof}[Proof of Theorem~B]
First, define
\[
L=\{\zeta\in \partial \D: (F_n(\zeta)) \text{ hits } (I_n)\},
\]
and, for each $m\ge 1$,
\begin{equation}\label{eq:Pom1}
L_m:= \{\zeta \in \partial \D : (f_{m+n} \circ \cdots \circ f_{m+1}(\zeta)) \text{ hits } (I_{m+n})\}.
\end{equation}
Then, by definition,
\begin{equation} \label{eq:Pom2}
 L = F_m^{-1}(L_m),\;\text{ for } m\in\N.
\end{equation}
So, since $(F_n)$ is contracting, we can apply Lemma~\ref{lem:Pomgen} to deduce that~$L$ has either full or zero measure in~$\partial\D$, as required.
\end{proof}
Theorem~C states that if $f_n$ are centred  inner functions with $|f_n'(0)| \leq \lambda<1$ for $n \in \N$, $F_n = f_n \circ \cdots \circ f_1$, and $(I_n)$ is a shrinking target that satisfies
\begin{equation}\label{eq:Indiv}
\sum _{n=1}^{\infty} |I_n|= \infty,
\end{equation}
then the sequence $(F_n(\zeta))$ hits $(I_n)$ for almost every $\zeta\in\partial \D$.
\begin{proof}[Proof of Theorem~C]
For $n\in \N$, put $E_n= F_n^{-1}(I_n)$.  Then
\[
\{\zeta \in \partial \D: F_n(\zeta) \in I_n \; \text{infinitely often}\}= \{\zeta \in \partial \D:\zeta \in E_n \; \text{infinitely often}\}= \bigcap_{N\geq 1}\bigcup_{n \geq N} E_n.
\]
We must show that this set has full measure.

By Lemma~\ref{lem:Lowner} we have $\omega (0, E_n, \D)= \omega(0,I_n, \D)$, for $n\in \N$. Thus
\begin{equation}\label{eq:EnIn}
|E_n|=|I_n|, \text{ for } n\in \N,\quad \text{so}\quad \sum_{n=1}^\infty |E_n|= \infty,
\end{equation}
by \eqref{eq:Indiv}. Since $\partial \D$ has Lebesgue measure $2\pi$, it suffices to prove that
\begin{equation}\label{eq:Thm1}
\frac{1}{2\pi}|E_n \cap E_m| \leq \left( \frac{1}{2\pi}|E_n| \right)\left( \frac{1}{2\pi}|E_m|\right) + |E_n|\,c_{n-m},
\end{equation}
where $\sum_{p=1}^{\infty} c_p <\infty$. Indeed, Lemma~\ref{lem:Philipp} will then imply that almost every $\zeta \in \partial \D$ lies in $E_n$ infinitely often, and the result follows.

Let $n=m+p$, where $p\ge 1$. Then, since $F_{m+p}=f_{m+p}\circ \cdots \circ f_{m+1}\circ F_m$, we have
\[
E_m \cap E_{m+p} = F_m^{-1}(I_m \cap (f_{m+1}^{-1} \circ \cdots \circ f_{m+p}^{-1} (I_{m+p}))).
\]
Thus,
\begin{align}
|E_m \cap E_{m+p}|&= 2\pi\omega(0, E_m \cap E_{m+p}, \D) \nonumber \\
&= 2\pi\omega(0,  I_m \cap (f_{m+1}^{-1} \circ \cdots \circ f_{m+p}^{-1} (I_{m+p})), \D) \nonumber \\
&= | I_m \cap (f_{m+1}^{-1} \circ \cdots \circ f_{m+p}^{-1} (I_{m+p}))|, \label{eq:hm1}
\end{align}
by Lemma~\ref{lem:Lowner}.

We now apply Lemma \ref{lem:Pom3} with $A=I_m$, $E=I_{m+p}$, $c=(1-\lambda)/84$ and the $p$ inner functions $f_{m+p}, \dots, f_{m+1}$. In this application,~$m$ is fixed and~$p$ is the running index. We obtain
\[
\left| \frac{ |I_m \cap (f_{m+1}^{-1} \circ \cdots \circ f_{m+p}^{-1} (I_{m+p}))|}{|I_{m+p}|} - \frac{|I_m|}{2\pi}\right| \leq K e^{-cp},
\]
for all $m, p \in \N$. Hence
\begin{equation}
\label{eq:pom}
|I_m \cap (f_{m+1}^{-1} \circ \cdots \circ f_{m+p}^{-1} (I_{m+p}))| \leq \frac{1}{2\pi} |I_m| |I_{m+p}|+ |I_{m+p}|Ke^{-cp}.
\end{equation}
Combining \eqref{eq:EnIn}, (\ref{eq:hm1}) and (\ref{eq:pom})  gives
\[
|E_m \cap E_{m+p}| \leq \frac{1}{2\pi}|E_m| |E_{m+p}|+ |E_{m+p}|Ke^{-cp},
\]
which is \eqref{eq:Thm1}. The result now follows from Lemma~\ref{lem:Philipp}.
\end{proof}

\section{Proof of Theorem~D}\label{nonunifresult}

 In this section, we use Theorem~C to prove Theorem~D, which is part~(b) of the following more general result. This concerns non-uniform contraction of a forward composition of inner functions and relates in various ways the size of the contraction to the size of the shrinking target that a given forward composition will hit.

\begin{thm} \label{thm:contracting}
For $n\in \N$, let $f_n$ be centred inner functions with  $|f_n'(0)| =\lambda_n=1-\mu _n$, let $F_n = f_n \circ \cdots \circ f_1$, and let $I_n \subset \partial \D.$
\begin{itemize}
\item[(a)]
Suppose that $\sum _{n=1}^{\infty} \mu_n= \infty$. Let
$n_k$ to be the least positive integer such that
\begin{equation}\label{eq:mu}
\mu_{1}+ \dots + \mu_{n_k} \in [k,k+1), \quad\text{for }k \geq 1.
\end{equation}
 Set $m_0=0$, and for $k\geq 1$ choose $m_k$ with  $n_k\le m_k < n_{k+1},$   such that
\begin{equation}\label{eq:max}
|I_{m_k}|=\max\{|I_n|: n_k\le n<n_{k+1}\}.
\end{equation}
If \;$\sum_{k=1}^\infty |I_{m_k}| =\infty$, then $(F_n(\zeta))$ hits $(I_n)$ for almost every $\zeta\in\partial\D$.
\item[(b)] If\; $\sum_{n=1}^\infty \mu_n|I_n|=\infty,$ then $(F_n(\zeta))$ hits $(I_n)$ for almost every $\zeta\in\partial\D$.
\item[(c)]
Suppose that $|I_n|\le c<1$ for $n\in \N$ and $\sum _{n=1}^{\infty} |I_n|= \infty$. Let
$n_k$ be the least positive integer such that
\begin{equation}\label{eq:I}
|I_{1}|+ \dots + |I_{n_k}| \in [k,k+1), \quad\text{for }k \geq 1,
\end{equation}
and choose $m_k$, with $n_k\le m_k<n_{k+1}-1,$ such that
\begin{equation}\label{eq:min}
\mu_{m_k}=\min\{\mu_n: n_k\le n<n_{k+1}-1\}.
\end{equation}
If \;$\sum_{k=1}^\infty \mu_{m_k} =\infty$, then $(F_n(\zeta))$ hits $(I_n)$ for almost every $\zeta\in\partial\D$.
\end{itemize}
\end{thm}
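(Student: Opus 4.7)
The plan is to establish part~(a) by a doubling-up blocking reduction to Theorem~C, and then to derive parts~(b) and~(c) from~(a) by verifying the summability hypothesis $\sum_k |I_{m_k}|=\infty$.

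For part~(a), the naive attempt of composing the $f_n$ over consecutive blocks $\{m_{k-1}+1,\ldots,m_k\}$ and invoking Theorem~C fails because, for instance when $m_{k-1}=n_k-1$ and $m_k=n_k$, such a super-block reduces to the single index $n_k$, whose $\mu_{n_k}$ can be arbitrarily small. The key step will be a parity argument: $\sum_k |I_{m_k}| = \infty$ forces at least one of $\sum_j |I_{m_{2j}}|$ or $\sum_j |I_{m_{2j+1}}|$ to diverge, and one can assume without loss of generality that the former does (the latter case is symmetric). Setting $m_0=0$ and defining super-blocks $\tilde B_j = \{m_{2j-2}+1,\ldots,m_{2j}\}$ together with the centred inner functions
\[
\tilde g_j := f_{m_{2j}} \circ f_{m_{2j}-1} \circ \cdots \circ f_{m_{2j-2}+1},
\]
I obtain $F_{m_{2j}} = \tilde g_j \circ \tilde g_{j-1} \circ \cdots \circ \tilde g_1$. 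Writing $S_n = \mu_1+\cdots+\mu_n$ and using $m_{2j}\geq n_{2j}$ together with $m_{2j-2}\leq n_{2j-1}-1$, I compute
\[
\sum_{n\in\tilde B_j}\mu_n = S_{m_{2j}} - S_{m_{2j-2}} \geq S_{n_{2j}} - S_{n_{2j-1}-1} > 2j - (2j-1) = 1,
\]
whence $|\tilde g_j'(0)| = \prod_{n\in\tilde B_j}(1-\mu_n) \leq \exp\!\bigl(-\sum_{n\in\tilde B_j}\mu_n\bigr) < e^{-1}$ uniformly in~$j$. Theorem~C applied to $(\tilde g_j)$ with targets $(I_{m_{2j}})$ then yields $F_{m_{2j}}(\zeta)\in I_{m_{2j}}$ for infinitely many $j$, for almost every $\zeta\in\partial\D$, which in particular implies $(F_n(\zeta))$ hits $(I_n)$.

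For part~(b), I would bound the $\mu$-mass of each $n_k$-block $B_k = \{n_k,\ldots,n_{k+1}-1\}$: the minimality of $n_{k+1}$ gives $S_{n_{k+1}-1}<k+1$, while $n_k-1\geq n_{k-1}$ gives $S_{n_k-1}\geq S_{n_{k-1}}\geq k-1$, so $\sum_{n\in B_k}\mu_n < 2$ for every~$k$. Since $|I_n|\leq |I_{m_k}|$ on $B_k$,
\[
\infty = \sum_{n=1}^\infty \mu_n|I_n| = \sum_{k\geq 1}\sum_{n\in B_k}\mu_n|I_n| \leq 2\sum_{k\geq 1} |I_{m_k}|,
\]
so the hypothesis of~(a) is met and~(b) follows. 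Part~(c) is the mirror of~(b): the choice $\mu_{m_k}=\min_{n\in B'_k}\mu_n$ with $B'_k = \{n_k,\ldots,n_{k+1}-2\}$ gives $\sum_{n\in B_k}\mu_n|I_n|\geq \mu_{m_k}\sum_{n\in B'_k}|I_n|$, and the uniform bound $|I_n|\leq c<1$ together with $T_{n_{k+1}-1}>k+1-c$ and $T_{n_k-1}<k$, where $T_n := |I_1|+\cdots+|I_n|$, produces a positive lower bound on $\sum_{n\in B'_k}|I_n|$ (possibly after pairing consecutive blocks if $c$ is close to~$1$), so $\sum_n\mu_n|I_n|=\infty$ and~(b) applies.

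The main obstacle is the lack of uniform contraction in the naive super-blocking for~(a); the parity trick, producing a gap of more than~$1$ in $\mu$-mass across each doubled super-block, is what unlocks Theorem~C.
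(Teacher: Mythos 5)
Your argument coincides with the paper's: part~(a) uses the same parity trick and the same doubled super-blocks $\{m_{2j-2}+1,\dots,m_{2j}\}$ with the identical estimate $S_{m_{2j}}-S_{m_{2j-2}}>1$ feeding into Theorem~C, and part~(b) uses the same block bound $\sum_{n=n_k}^{n_{k+1}-1}\mu_n<2$ followed by $\mu_n|I_n|\le\mu_n|I_{m_k}|$. For part~(c) the paper likewise reduces to~(b), but it works with the full block $\{n_k,\dots,n_{k+1}-1\}$, whose $|I|$-mass exceeds $1-c$, while silently invoking $\mu_{m_k}\le\mu_{n_{k+1}-1}$ even though $m_k$ only minimises $\mu$ over $\{n_k,\dots,n_{k+1}-2\}$; your more careful restriction to that shorter range is the honest version, but it yields only the lower bound $1-2c$, and your ``pairing consecutive blocks'' repair is not worked out and does not obviously succeed, since merging blocks would require replacing $\mu_{m_k}$ by the (possibly smaller) minimum over the merged range, which is not controlled by $\sum_k\mu_{m_k}=\infty$. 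So your parts~(a) and~(b) are complete and match the paper; your part~(c) is complete only for $c<\tfrac12$, though the gap you have exposed is present (unacknowledged) in the paper's own proof of~(c) as well.
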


\begin{proof}
We first prove part~(a) and then show that  part~(a) implies part~(b), which in turn implies part~(c).

In part (a), we can assume that at least one of the series $\sum_{k=1}^{\infty} |I_{m_{2k}}|$, $\sum_{k=1}^{\infty} |I_{m_{2k+1}}|$ is divergent. The argument is similar in both cases and here we assume the first series is divergent.

Since, for every $k\geq 1$,
\[
n_{2k} \leq m_{2k} < n_{2k+1},
\]
it follows from the choice of $n_k$ in \eqref{eq:mu} that
\[
\sum_{i=1}^{m_{2(k-1)}} \mu_i  \leq  \sum_{i=1}^{n_{2k-1}-1} \mu_i <  2k-1
\quad
\text{and}
\quad
\sum_{i=1}^{m_{2k}} \mu_i \geq  \sum_{i=1}^{n_{2k}} \mu_i  \geq 2k.
\]
Hence, by subtracting these two inequalities, we obtain
\[
\mu_{(m_{2k-2}+1)}+\dots+\mu_{m_{2k}} \ge 1.
\]
It follows that
\[
\lambda_{(m_{2k-2}+1)} \dots \lambda_{m_{2k}}= (1-\mu_{(m_{2k-2}+1)}) \cdots (1-\mu_{m_{2k}}) \leq e^{-(\mu_{(m_{2k-2}+1)}+\dots+\mu_{m_{2k}})}\leq e ^{-1}.
\]
If we now set, for $k \ge 1$,
\[g_k=f_{m_{2k}} \circ \cdots \circ f_{(m_{2k-2}+1)},\]
then we have $g_k(0)=0$ and $|g_k'(0)|= \lambda_{m_{2k-2}+1} \cdots \lambda_{m_{2k}} \leq e^{-1}.$ Thus the sequence
\[
G_k=g_k \circ \cdots \circ g_1,\quad k\ge 1,
\] 
is uniformly contracting. 

Since $\sum_{k=1}^{\infty} |I_{m_{2k}}|=\infty$, it follows from Theorem~A that $\{\zeta \in \partial \D: G_k(\zeta) \in I_{m_{2k}},\;\text{infinitely often}\}$ has full measure. Since $G_k=F_{m_{2k}}$, we deduce that
$\{\zeta \in \partial \D: F_{m_{2k}}(\zeta) \in I_{m_{2k}},\;\text{infinitely often}\}$ has full measure, which gives the result.

Part~(b) follows from part~(a) because
\[
\mu_n|I_n|\le \mu_n|I_{m_k}|,\quad \text{for } \text{$n_k\le n<n_{k+1}$},
\]
and, by \eqref{eq:mu},
\[
\mu_{n_k}+\cdots + \mu_{n_{k+1}-1} \le 2,\quad\text{for }k\ge 1,
\]
so, by \eqref{eq:max},
\[
\infty=\sum_{n=n_1}^\infty \mu_n|I_n| =\sum_{k=1}^\infty \sum_{n=n_k}^{n_{k+1}-1} \mu_n|I_n| \le \sum_{k=1}^\infty \left(\sum_{n=n_k}^{n_{k+1}-1} \mu_n\right)|I_{m_k}|\le 2\sum_{k=1}^\infty |I_{m_k}|,
\]

as required.

Finally we show that part~(c) follows from part~(b). Indeed, since $|I_n|\le c<1$ for $n\in \N$, we have, by the choice of $n_k$ in \eqref{eq:I}, that
\[
\sum_{n=n_k}^{n_{k+1}-1} |I_n|\ge 1-c,\quad\text{for } k\ge 1.
\]
Thus, by \eqref{eq:min},
\[
\infty=\sum_{k=1}^\infty \mu_{m_k}\le \frac{1}{1-c} \sum_{k=1}^\infty\mu_{m_k} \left(\sum_{n=n_k}^{n_{k+1}-1} |I_n|\right)\le \frac{1}{1-c} \sum_{k=1}^\infty\sum_{n=n_k}^{n_{k+1}-1} \mu_n|I_n|
= \frac{1}{1-c}\sum_{n=n_1}^\infty \mu_n|I_n|,
\]
as required.
\end{proof}

\section{Proofs of Theorems~E and~F}\label{ThmEproof}

To start with, we prove the following theorem which gives a connection between  the property that  almost every boundary orbit is  dense and the property of hitting a shrinking target. It shows that almost every boundary orbit being dense in $\partial \D$ for a given  forward composition $(F_n)$ of inner functions is equivalent to the fact that a related forward composition of centred inner functions $(G_n)$ almost always hits certain shrinking targets on~$\partial \D$.

\begin{thm}\label{recurrenceshrinking}
Let $f_n$ be inner functions and $F_n = f_n \circ \cdots \circ f_1$ for $n\in \N$, and let $\lambda_n$ be the hyperbolic distortion of $f_n$, as defined in (\ref{lambda-n}). Let also
\begin{equation}\label{eq:Mobius}
M_n(z)= \frac{z+F_n(0)}{1+\overline{F_n(0)}z},\quad\text{for } n \in\N,
\end{equation}
and $M_0(z)=z$, and put $g_n= M_n^{-1} \circ f_n \circ M_{n-1}$, $n\in \N$, and $G_n = g_n \circ \cdots \circ g_1$. Then
\begin{itemize}
\item[(a)] $g_n(0)=0$ for all $n$;
\item[(b)] for each $n\in \N$, $|g'_n(0)|=\lambda_n$,  so $(G_n)$ is contracting if and only if $(F_n)$ has the same property; 
\item[(c)] almost every boundary orbit is dense for $(F_n)$ if and only if $(G_n(\zeta))$ almost always hits every shrinking target of arcs of the form $I_n = M_n^{-1}(I)$, $n \in\N$, for every arc $I\subset\partial \D$;
\item[(d)] with $I$ and $I_n$ as in part~(c), there exists a constant $c(I)>0$ such that 
\[
|I_n|\geq  c(I)(1-|F_n(0)|),\quad\text{for } n\in\N.
\]
In particular, 
if $(G_n(\zeta))$ almost always hits every shrinking target $(J_n)$ with size $|J_n|=c(1-|F_n(0)|)$, where $c>0$, then almost every boundary orbit $(F_n(\zeta))$  is dense in $\partial \D$.
\end{itemize}
\end{thm}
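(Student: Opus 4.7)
The plan hinges on the telescoping identity $G_n = M_n^{-1} \circ F_n$, which follows at once from $g_k = M_k^{-1} \circ f_k \circ M_{k-1}$ and $M_0(z)=z$: adjacent factors of the form $M_{k-1}$ and $M_{k-1}^{-1}$ cancel pairwise in the product. Part~(a) is then the one-line computation $g_n(0) = M_n^{-1}(f_n(F_{n-1}(0))) = M_n^{-1}(F_n(0)) = 0$. For part~(b), I would apply the chain rule to $g_n = M_n^{-1} \circ f_n \circ M_{n-1}$ at $0$: for the disc automorphism $M_k(z) = (z + F_k(0))/(1 + \overline{F_k(0)} z)$ one has $M_k'(0) = 1 - |F_k(0)|^2$, and since $M_k^{-1}$ sends $F_k(0)$ to $0$, $(M_k^{-1})'(F_k(0)) = 1/(1 - |F_k(0)|^2)$. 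Substituting into the definition~\eqref{lambda-n} of $\lambda_n$, after observing that the hyperbolic density ratio is $\rho_\D(F_n(0))/\rho_\D(F_{n-1}(0)) = (1-|F_{n-1}(0)|^2)/(1-|F_n(0)|^2)$, produces $|g_n'(0)| = \lambda_n$. The contraction equivalence then follows from~\eqref{eq:contracting}.

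For part~(c), I would fix a countable basis $\{I^{(k)}\}$ of arcs on $\partial \D$ (arcs with rational endpoints, for instance). Density of $(F_n(\zeta))$ in $\partial\D$ is equivalent to: for every $k$, $F_n(\zeta) \in I^{(k)}$ for infinitely many $n$. Since $G_n = M_n^{-1} \circ F_n$ and $M_n$ extends to a homeomorphism of $\overline{\D}$, the latter condition is equivalent to $(G_n(\zeta))$ hitting the target $I^{(k)}_n := M_n^{-1}(I^{(k)})$. A countable intersection of full-measure sets remains of full measure, so the two-way equivalence follows.

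For part~(d), I would bound the derivative of $M_n^{-1}$ on $\partial \D$:
\[
|(M_n^{-1})'(w)| \;=\; \frac{1 - |F_n(0)|^2}{|1 - \overline{F_n(0)}\,w|^2} \;\ge\; \frac{1 - |F_n(0)|^2}{(1+|F_n(0)|)^2} \;\ge\; \frac{1 - |F_n(0)|}{2},
\]
so $|I_n| = \int_I |(M_n^{-1})'(w)|\,|dw| \ge (|I|/2)(1 - |F_n(0)|)$, and one may take $c(I) = |I|/2$. For the ``in particular'' statement, for each arc $I$ in the basis pick a subarc $J^{(I)}_n \subset I_n$ with $|J^{(I)}_n| = c(I)(1 - |F_n(0)|)$; the hypothesis, applied with $c = c(I)$, gives that $(G_n(\zeta))$ hits $(J^{(I)}_n)$, hence $(I_n)$, for almost every $\zeta$, and part~(c) yields density.

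The only delicate point I anticipate is the reading of the quantifier on $c$ in the ``in particular'' statement of part~(d): to recover density one must be free to apply the hypothesis with the constant $c = c(I)$ depending on the basis arc $I$, since arbitrarily small arcs are needed. With that reading in place the argument is essentially routine, the only real calculation being the M\"obius derivative estimate on $\partial\D$.
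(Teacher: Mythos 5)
Your proposal is correct and follows essentially the same route as the paper: the identity $G_n=M_n^{-1}\circ F_n$, the chain-rule computation for (b) via $M_k'(0)=1-|F_k(0)|^2$, a countable basis of arcs for (c), and a lower bound on $|I_n|$ of the form $c(I)(1-|F_n(0)|)$ for (d). The only cosmetic difference is that you obtain the bound in (d) by integrating $|(M_n^{-1})'|$ over $I$ rather than by the paper's explicit chord-length formula for $|M_n^{-1}(e^{i\phi})-M_n^{-1}(e^{i\theta})|$; both give the same conclusion.
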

\begin{proof}
Part~(a) is straightforward, as is part~(b), since
\begin{align*}
|g_n'(0)|&=\left|(M_n^{-1})'(F_n(0))f_n'(F_{n-1}(0))M_{n-1}'(0)\right|\\
&=\frac{\rho_\D(0)}{|M_n'(0)|}  |f_n'(F_{n-1}(0))|   \frac{|M_{n-1}'(0)|}{\rho_\D(0)}\\
&=\frac{\rho_\D(F_{n}(0)) |f_n'(F_{n-1}(0))|}{\rho_\D(F_{n-1}(0))} =\lambda_n.
\end{align*}
Part~(c) is also straightforward,  in view of the equivalence of the properties in parts~(b) and~(c) of Corollary~\ref{cor:denseorbits}, since, for all arcs $I\subset \partial \D$ and $\zeta\in \partial \D$,
\[
 F_n(\zeta)= (M_n \circ G_n)(\zeta) \in I \quad\text{if and only if}\quad G_n(\zeta)\in I_n=M_n^{-1}(I).
\]

Let us prove (d).
Consider an arc $I=(e^{i\theta},e^{i\phi} )$, where $\theta<\phi<\theta+2\pi$ and let  $I_n=M_n^{-1}(I)$ be the arcs $(e^{i\theta_n},e^{i\phi_n} )$, $n\in \N$ as in part~(c). We can estimate the size of $|I_n|$ by a direct computation  as 
\begin{equation}\label{eq:partd1}
 2\sin(\tfrac12|I_n|)= |e^{i\phi_n}-e^{i\theta_n}| = |M_n^{-1}(e^{i\phi})-M_n^{-1}(e^{i\theta})|=\frac{(1-|F_n(0)|^2)|e^{i\phi}-e^{i\theta}|}{|F_n(0)-e^{i\phi}||F_n(0)-e^{i\theta}|}.
\end{equation}
In particular,
\[
 |I_n|\ge \frac14(1-|F_n(0)|^2)|e^{i\phi}-e^{i\theta}|\ge c(I)(1-|F_n(0)|).
\]
So if $(G_n(\zeta))$ almost always hits every shrinking target $(J_n)$ with $|J_n|=c(1-|F_n(0)|)$, where $c>0$, we deduce that $(G_n(\zeta))$ almost always hits every shrinking target $(I_n)$ of the form $I_n=M_n^{-1}(I), n\in\N$, for every arc $I\subset \partial\D$.
Hence  almost every boundary orbit $(F_n(\zeta))$ is dense by part~(c).
\end{proof}


We now show how Theorem~A can be applied to prove Theorem~E, which states that if $F_n, n\in \N,$  is a sequence of holomorphic self-maps of $\mathbb{D}$ and
\begin{equation}\label{ThmB:ineq}
\sum_{n=0}^{\infty} \left(1-|F_n(0)|\right) <\infty,
\end{equation}
then  for almost all $\zeta\in \partial \D$ we have
\[
{\rm dist}\,(F_n(\zeta), F_n(0))\to 0\;\text{ as } n\to \infty.
\]
\begin{proof}[Proof of Theorem~E]
Let $\zeta_n=e^{i \arg(F_n(0))}$, so ${\rm dist}(\zeta_n,F_n(0))=1-|F_n(0)|$.

Fix $r>0$ arbitrary and consider the arcs
\[
J_n(r):=D(\zeta_n,r) \cap \partial \D,\quad\emph{} n\in \N.
\]
Observe that $F_n(0)\in D(\zeta_n,r)$ for $n>N$  large enough, where $N=N(r)$.
Define the set
\[
E(r):=\{ \zeta\in\partial \D : F_n(\zeta)\in J_n(r) \text{\ for all $n$ sufficiently large}\}.
\]
We claim that $E(r)$ has full measure, independently of the value of~$r$.

Assuming that the claim is true, we choose a sequence $r_k\to 0$. Since $E(r)$ is decreasing in $r$, we have $E:=\bigcap_r E(r)=\bigcap_k E(r_k)$ is a countable intersection of full measure sets and therefore $E$ has full measure. On the other hand $E$ coincides with the set of $\zeta\in\partial \D$ such that $|F_n(\zeta)-F_n(0)|\to 0$ and we are done.

To see the claim, observe that
\[
E(r)=\{\zeta\in\partial \D: (F_n(\zeta)) \text{ does not hit $(J_n(r)^c)$} \},
\]
where $J_n(r)^c=\partial \D\setminus J_n(r)$.

Now, we recall the M\"obius transformations $M_n$ introduced in \eqref{eq:Mobius}, and define  $G_n: =M_n^{-1} \circ F_n$ for $n\in\N$, which are self-maps of $\D$ that fix the origin for all $n$. We deduce that
\[
E(r)=\{\zeta \in\partial \D: G_n(\zeta) \text{ does not hit } (I_n)\},\quad\text{where } I_n:=M_n^{-1}(J_n(r)^c), n\in \N.
\]
Now, each arc $I_n=M_n^{-1}(J_n(r)^c)$ lies in $\partial \D$ directly opposite the point $\zeta_n$, and it follows from \eqref{eq:partd1} that there exists a positive constant $C(r)$ such that $|I_n|\le  C(r) (1-|F_n(0)|)$, for $n\in \N$. Thus,  $\sum (1-|F_n(0)|) <\infty$ implies that $\sum |I_n|<\infty$ and the claim follows from Theorem~A applied to the centred functions $G_n$.
\end{proof}
We now show that Theorem~F is a fairly immediate consequence of Theorem~D and Theorem~\ref{recurrenceshrinking}. Theorem~F states that if $F_n=f_n\circ \dots\circ f_1,$ for $n\in \N,$ where $f_n$ are inner functions, $\lambda_n$ are defined as in \eqref{lambda-n}, $\mu_n=1-\lambda_n$, and
\begin{equation}\label{eq:muF2}
\sum_{n=1}^\infty \mu_n (1-|F_n(0)|)=\infty,
\end{equation}
then $(F_n(\zeta)$) is dense in  $\partial \D$ for almost all $\zeta\in \partial\D.$

\begin{proof}[Proof of Theorem~F]

Using the notation of Theorem~\ref{recurrenceshrinking}, we deduce from part~(b) of that theorem that the hyperbolic distortion of $f_n$ at $F_{n-1}(0)$ is $|g'_n(0)|$ and part~(c) shows that the density of almost all boundary orbits under $(F_n)$ corresponds under the sequence of M\"obius transformations $(M_n)$ to the property that $(G_n(\zeta))$ almost always hits the shrinking target. Part~(d) of Theorem~\ref{recurrenceshrinking} states that for every arc $I\subset\partial \D$, there is a constant $c(I)>0$ such that $|I_n|\ge c(I)(1-|F_n(0)|)$, so
\[
\sum_{n=1}^\infty \mu_n(1-|F_n(0)|)=\infty\quad \text{implies that}\quad \sum_{n=1}^\infty \mu_n|I_n|=\infty.
\]
Thus, we can apply Theorem~D to show that $(G_n(\zeta))$ hits $(I_n)$ for almost every $\zeta$ and hence  almost all boundary orbits for $(F_n)$ are dense in $\partial \D$
\end{proof}

\section{Examples}\label{Examples}
In this section we construct a number of examples to show the sharpness of our results.

First, Theorem~B states that if $F_n=f_n\circ \cdots \circ f_1$, $n\in \N$, is a contracting forward composition of centred inner functions and $(I_n)$ is a shrinking target, then
\[
\{\zeta\in\partial \D: (F_n(\zeta)) \text{ hits } (I_n)\}
\]
has either full measure or zero measure in $\partial \D$. Our first example shows that the hypothesis of contracting cannot be omitted from this theorem.
\begin{ex}[Sharpness of Theorem~B]\label{ex0.05}
There exits a shrinking target $(I_n)$ and a sequence of rotations $R_n$ such that
\begin{enumerate}[\rm (a)]
\item
$R_n(\zeta)$ hits $(I_n)$, for all $\zeta=e^{i\theta}$, $0\le \theta\le \pi$, but
\item
$R_n(\zeta)$ fails to hit $(I_n)$, for all $\zeta=e^{i\theta}$, $\pi < \theta < 2\pi$.
\end{enumerate}
\end{ex}
\begin{proof}
For $m\in\N$, let $I_m=\{e^{i\theta}: \pi\le \theta \le \pi+\pi/m\}$ and for $k=0,\ldots, m-1$ let
\[
R_{m,k}(z)=e^{\pi i (k+1)/m}z\quad \text{and}\quad I_{m,k}= \{e^{i\theta}: \pi -\pi (k+1)/m\le \theta \le \pi -\pi k/m\}.
\]
Observe that for any $m\in\N$, the sequence $(I_{m,k})_{0\leq k\leq m-1}$ divides the arc $[0,\pi]$ into $m$ subarcs. Then, for $m\in\N$,
\[
R_{m,k}(I_{m,k})=I_m, \quad \text{for } k=1,\ldots, m.
\]
Now we arrange the maps $R_{m,k}$ into a single sequence $R_n$, where  $n=\frac12m(m-1)+1+k$, $0\le k\le m-1$, $m\in\N.$ Then every $e^{i\theta}$ with $0\le \theta\le \pi$ belongs to an infinite number of arcs $I_{m,k}$, so there exists an infinite number of distinct values of $n$ for which $R_n(e^{i\theta})\in I_n$. Hence $R_n(e^{i\theta})$ hits the shrinking target $(I_n)$ for such $\theta$.

On the other hand it is clear that for $\pi < \theta < 2\pi$, $R_n(e^{i\theta})\not\in I_n$ for all $n\in\N,$ so $R_n(e^{i\theta})$ fails to hit $(I_n)$, for all $\pi \le \theta \le 2\pi$.
\end{proof}

Next we construct an example to show the sharpness of Theorem~D and Theorem~C, and then use this to construct examples to show the sharpness of Theorem~F. In particular, we give an example of a contracting forward composition of inner functions $F_n=f_n\circ \cdots \circ f_1$ which satisfies
\[
\sum_{n=0}^{\infty} \left(1-|F_n(0)|\right) =\infty
\]
and  for which boundary points with dense orbits do not form a full measure set. 
\begin{ex}[Sharpness of Theorem~D]\label{ex0.1}
Suppose that $(\mu_n)$ and $(l_n)$ are sequences, with $0<\mu_n,l_n\le \tfrac12$ for $n\in \N$, where $(l_n)$ is decreasing to~0,
\begin{equation}\label{mul-mul}
\sum_{n=1}^\infty \mu_n=\infty,\quad \sum_{n=1}^\infty l_n=\infty\quad\text{and}\quad \sum_{n=1}^\infty \mu_n l_n<\infty,
\end{equation}
and take $I_n$ to be the arc in $\partial \D$, centred at~1, with $|I_n| = l_n$ for $n\in \N$.

Then the forward composition sequence $B_n= b_n \circ \cdots \circ b_1$, where
\begin{equation}\label{eq:bnBn}
b_n(z)= z \frac{z+\lambda_n}{1+\lambda_nz},\quad\text{with } \lambda_n=1-\mu_n, n\ge1,
\end{equation}
fails to hit $(I_n)$ for almost every $\zeta\in \partial \D$.
\end{ex}

\begin{rem}
The hypotheses of Example~\ref{ex0.1} do not require that $\mu_n\to 0$ as $n\to \infty$; however, the conditions $\sum_{n=1}^\infty l_n=\infty$ and $\sum_{n=1}^\infty \mu_n \l_n<\infty$ together imply that $\liminf_{n\to \infty} \mu_n =0$.
\end{rem}

We shall see in Example~\ref{ex015} how  Example~\ref{ex0.1} can be used to show that Theorem~C is best possible since, if a positive sequence $(\mu_n)$ is given with $\mu_n\to 0$ as $n\to\infty$ and $\sum_{n=1}^\infty \mu_n=\infty$, then we can choose a decreasing sequence $(l_n)$ such that the other two conditions in \eqref{mul-mul} are satisfied.

\begin{proof}[Proof of Example~\ref{ex0.1}]
Our shrinking target is the nested sequence of arcs $(I_n)$ each with centre~1 such that $|I_n|=l_n$ for $n\in \N$. We need to show that the set
\[
 \{\zeta\in\partial\D : B_n(\zeta) \in I_n \text{\ infinitely often}\}
\]
has measure zero.

First, we note that $b_n(\pm1)= 1$, $b_n(0)=0$ and $b_n'(0)=\lambda_n$. Then denote by $R_n$ and $L_n$ the two analytic inverse branches of $b_n$, each defined on $\{z: |z-1|<\tfrac12\}$, such that $R_n(1)=1$ and $L_n(1)=-1$. Now define
\[
J_n:=R_n(I_n)\quad\text{and}\quad K_n:=L_n(I_n),\quad \text{for } n\in \N,
\]
which are arcs on $\partial \D$ centred at~1 and $-1$ respectively.

Now we estimate the behaviour of $b_n$ near the point~1. For $0<\theta\le\pi/3$, we have
\begin{align*}
\arg b_n(e^{i\theta})-\theta &=\arg\left(\frac{e^{i\theta}+\lambda_n}{1+\lambda_n e^{i\theta}}\right)\\
&=\arg(e^{i\theta}+\lambda_n)-\arg(1+\lambda_n e^{i\theta})\\
&=\tan^{-1}\left(\frac{\sin \theta}{\cos\theta+\lambda_n}\right)-\tan^{-1}\left(\frac{\lambda_n\sin \theta}{1+\lambda_n\cos\theta}\right)\\
&\le \frac{\sin \theta}{\cos\theta+\lambda_n} - \frac{\lambda_n\sin \theta}{1+\lambda_n\cos\theta}\quad\left(\text{since }\frac{\sin \theta}{\cos\theta+\lambda_n}>\frac{\lambda_n\sin \theta}{1+\lambda_n\cos\theta}\right)\\
&= \frac{\sin\theta(1-\lambda_n^2)}{(\cos\theta+\lambda_n)(1+\lambda_n\cos\theta)}\\
&\le 2\mu_n\sin\theta,
\end{align*}
since $\cos \theta \ge \cos \pi/3=\tfrac12$ and $\tfrac12\le \lambda_n< 1$.

Because $I_n$ is symmetric about the point~1 and $|I_n|=l_n\le \tfrac12$, and $R_n$ is the inverse branch of $b_n$ such that $R_n(1)=1$, we deduce that, for $n\in \N$,
\[
J_n=R_n(I_n)\subset I_n \quad\text{satisfies} \quad \tfrac12|I_n|-\tfrac12|J_n|\le 2\mu_n\sin(\tfrac12 |J_n|)\le \mu_n|I_n|.
\]
It follows by Lemma~\ref{lem:Lowner} that
\begin{equation}\label{eq:cvgt}
|K_n|=|I_n|-|J_n| \le 2\mu_n|I_n|.
\end{equation}
Next note that
\[
\{\zeta\in\partial\D: B_n(\zeta)\in I_n, \text{ infinitely often}\}\subset\bigcup_{n=N}^\infty B_n^{-1}(I_n),\;\;\text{for each }N\in\N.
\]
We now examine the structure of $B_n^{-1}(I_n)$ for $n\in \N$.

In general, for $n\ge 2$, $J_{n}=R_n(I_n)\subset I_n \subset I_{n-1}$, so
\begin{align*}
B_n^{-1}(I_n)&= B_{n-1}^{-1}(R_n(I_n)\cup L_n(I_n))\\
&\subset B_{n-1}^{-1}(I_{n-1} \cup K_n)\\
&=B_{n-1}^{-1}(I_{n-1}) \cup B_{n-1}^{-1}(K_n).
\end{align*}
Thus, for $N\in \N$,
\[
\bigcup_{n=N}^\infty B_n^{-1}(I_n) \subset \bigcup_{n=N}^\infty B_{n-1}^{-1}(K_n)\cup B_N^{-1}(I_N).
\]
By Lemma~\ref{lem:Lowner} we have $|B_{n-1}^{-1}(K_n)|=|K_n|$ and $|B_{N}^{-1}(I_N)|=|I_N|$. Thus, by \eqref{eq:cvgt},
\[
\left|\bigcup_{n=N}^\infty B_n^{-1}(I_n)\right|\le \sum_{n=N}^\infty |K_n|+|I_N|\le 2\sum_{n=N}^\infty \mu_n|I_n|+|I_N| =: \epsilon_N.
\]
Since $\epsilon_N\to 0$ as $N\to\infty$ by hypothesis, the  proof is complete.
\end{proof}

\begin{rem}
The proof that Example~\ref{ex0.1} has the required property relies on the fact that the nested arcs $(I_n)$ are related to the maps $B_n=b_n\circ \cdots \circ b_1$ in a particularly nice way; to be precise, for all $n\ge 2$, we have
\[
B_n^{-1}(I_n)\subset B_{n-1}^{-1}(I_{n-1}) \cup B_{n-1}^{-1}(K_n)
\]
and
\[
\sum_{n=1}^\infty |B_{n-1}^{-1}(K_n)|=\sum_{n=1}^\infty |K_n|<\infty.
\]
It seems likely that if the arcs are aligned differently, for example, if they are all centred at $-1$ rather than 1, then we can have $(B_n)$ hits $(I_n)$ for almost every $\zeta\in\partial\D$.
\end{rem}

\begin{rem}
More generally, if we have functions $(F_n)$ and a shrinking target $(I_n)$ such that, for some $m\in \N$ and all $n\ge m+1$,
\[
F_n^{-1}(I_n) \subset F_{n-1}^{-1}(I_{n-1})\cup \cdots \cup F_{n-m}^{-1}(I_{n-m}) \cup K_{n,m}
\]
and
\[
\sum_{n=m+1}^\infty |K_{n,m}|<\infty,
\]
then $(F_n)$ fails to hit $(I_n)$ for almost every $\zeta\in\partial\D$.
\end{rem}

Next we demonstrate the extent to which Theorem~C is best possible.
\begin{ex}[Sharpness of Theorem~C]\label{ex015}
Let $(\lambda_n)$ be a sequence in $(0,1)$ such that $\lambda_n\to 1^-$ as $n\to\infty$ and put $\mu_n=1-\lambda_n$ for $n\in \N$. Suppose that $\sum_{n=1}^\infty \mu_n=\infty$.

Then there is a decreasing positive sequence $(l_n)$ such that $\lim_{n\to\infty}l_n=0$, $\sum_{n=1}^\infty l_n=\infty$, and a shrinking target $(I_n)$ with $|I_n|=l_n$ for $n\in \N$ such that the forward composition sequence $(B_n)$ defined in \eqref{eq:bnBn} fails to hit $(I_n)$ for almost every $\zeta\in \partial \D$.
\end{ex}
\begin{proof}
We construct the sequence $(l_n)$ explicitly in terms of $\mu_n$ in such a way that the hypotheses of Example~\ref{ex0.1} hold.

First we choose a subsequence $(\mu_{n_k})$, where $1<n_1<n_2<\cdots,$ such that
\[
\mu_{n_k}\ge \mu_n,\;\text{ for } n\ge n_k, k\ge 1,
\]
\[
n_{k+1}-n_k \nearrow \infty\;\text{ as }k\to\infty,\quad\text{and}\quad \sum_{k=1}^\infty \mu_{n_k}<\infty.
\]
This choice is possible since $\lim_{n\to\infty}\mu_n=0$.

Now put
\[
l_n:=\frac{1}{n_{k+1}-n_k},\quad\text{for } n_k\le n\le n_{k+1}-1, k\ge 0,
\]
where we take $n_0=1$. Then $(l_n)$ is decreasing and $\lim_{n\to\infty}l_n=0$,
\[
\sum_{n=1}^\infty l_n=\sum_{k=1}^\infty \sum_{n=n_k}^{n_{k+1}-1} 1/(n_{k+1}-n_k)=\sum_{k=1}^\infty 1=\infty,
\]
and
\[
\sum_{n=1}^\infty \mu_nl_n=\sum_{k=1}^\infty \sum_{n=n_k}^{n_{k+1}-1} \mu_n/(n_{k+1}-n_k)\le\sum_{k=1}^\infty \mu_{n_k}<\infty,
\]
as required.
\end{proof}
\begin{rem}
Because of the symmetry of $\mu_n$ and $l_n$ in this result, we could alternatively assume that we are given a (decreasing) sequence $(l_n)$ with $\lim_{n\to\infty}l_n=0$ and $\sum_{n=1}^\infty l_n=\infty$, and obtain the sequence $(\mu_n)$ so that the hypotheses of Example~\ref{ex0.1} hold.
\end{rem}

We now construct the promised forward composition of inner functions, related to Theorem~F, which is contracting but for which not almost all  boundary orbits are dense in $\partial\D$.

\begin{ex}[Sharpness of Theorem~F]\label{ex0.2} 
Let $\mu_n$, $b_n$ and $I_n$ be defined as in Example~\ref{ex0.1}, satisfying \eqref{mul-mul}. Put
\begin{equation}\label{eq:fnbn}
F_n= f_n \circ \cdots \circ f_1,\quad\text{where}\quad f_n= M_{n} \circ b_n \circ M_{n-1}^{-1},
\end{equation}
and $M_n$ is the M\"obius map such that
$M_n(I_n)=I:=\{e^{it}: \pi/2 \leq t \le 3\pi/2\}, \;\text{for } n\in \N,$ and $M_0$ is the identity. Then, the hyperbolic distortion of $f_n$ at $F_{n-1}(0)$ is $\lambda_n=1-\mu_n$, so the sequence $(F_n)$ is contracting, and
\begin{equation}\label{Fnprops}
\sum_{n=1}^{\infty} (1-|F_n(0)|) = \infty,\;\; \sum_{n=1}^{\infty}\mu_n (1-|F_n(0)|) < \infty, \;\; F_n(z) \to 1 \text{ as } n\to\infty,\text{  for } z \in \mathbb{D},
\end{equation}
and yet $F_n(\zeta) \in I$ at most finitely often for almost every $\zeta\in\partial \D$,  so none of these boundary orbits can be dense in $\partial\D$.  
\end{ex}
\begin{proof} Note that, for $n\in \N$, we have $F_n=M_n\circ B_n$ and $F_n(0)=M_n(0)$ is real and positive.

The fact that the hyperbolic distortion of $f_n$ at $F_{n-1}(0)$ is $\lambda_n=1-\mu_n$ follows from \eqref{eq:bnBn} and \eqref{eq:fnbn}. Thus $(F_n)$ is contracting, by \eqref{mul-mul} and \eqref{eq:contracting}. The property that $F_n(\zeta) \in I$ at most finitely often for almost every $\zeta\in\partial \D$ follows from the conclusion of Example~\ref{ex0.1}.

Using the facts that $|I_n|\to 0$ as $n\to\infty$, $\sum_{n=1}^\infty|I_n|=\infty$ and $\sum_{n=1}^\infty\mu_n|I_n|<\infty$, we shall deduce that the three properties in \eqref{Fnprops} hold. To do this we use the relationship between the lengths of the arcs $I_n$ and the length of $M_n(I_n)=I$, obtained from \eqref{eq:partd1} in the proof of Theorem~\ref{recurrenceshrinking}:
\[
\sin(\tfrac12|I_n|)=\frac{1-|F_n(0)|^2}{|F_n(0)-i||F_n(0)+i|}.
\]
Since $|I_n|\to 0$ as $n\to\infty$ and  $F_n(0)$  lies in $(0,1)$, this implies that $F_n(0) \to 1$ as $n\to\infty$, so  $F_n(z) \to 1$  as $n\to\infty$ for  $z \in \mathbb{D}$,  by \cite[Theorem~3.3]{BEFRS2}; see Remark~\ref{BEFRS3.3}. Moreover,  $(1-|F_n(0)|)/\frac12|I_n|\to 1$ as $n\to\infty$, which gives
\[
\sum_{n=1}^\infty (1-|F_n(0)|)= \infty\quad\text{and}\quad \sum_{n=1}^\infty \mu_n(1-|F_n(0)|)<\infty,
\]
as required.
\end{proof}
By modifying Example~\ref{ex0.2} slightly we can even arrange that the orbit of almost every boundary point converges to $1$.
\begin{ex}\label{ex0.3} Let $b_n$ and $I_n$ be defined as in Example~\ref{ex0.1} and let $\tilde I_n$ denote the subarc of $I_n$ with centre~1 and length $|I_n|/t_n$, where $t_n=\sum_{k=1}^n |I_k|$. Then put
\[
\tilde F_n= \tilde f_n \circ \cdots \circ \tilde f_1,\quad\text{where}\quad \tilde f_n= \tilde M_{n} \circ b_n \circ \tilde M_{n-1}^{-1},
\]
where $\tilde M_n$ is the M\"obius map such that
$
\tilde M_n(\tilde I_n)=I:=\{e^{it}: \pi/2 \leq t \le 3\pi/2\}, \;\text{for } n\in \N,
$
and $\tilde M_0$ is the identity. Then
\[
\sum_{n=1}^{\infty} (1-|\tilde F_n(0)|) = \infty, \quad \tilde F_n(z) \to 1 \text{ as } n\to\infty,\text{  for all } z \in \mathbb{D},
\]
$(\tilde F_n)$ is contracting, and $\tilde F_n(\zeta)\to 1$ as $n\to \infty$ for almost every $\zeta\in \partial\D.$
\end{ex}
\begin{proof}
First note that since $\sum_{n=1}^\infty |I_n|=\infty$ it follows that $\sum_{n=1}^\infty |\tilde I_n|=\infty$, by a theorem of Abel; see \cite[item~162, page~120]{inequalities}, for example. Also, since $|I_n|/|\tilde I_n|=t_n\to \infty$ as $n\to \infty$ and $\tilde M_n(\tilde I_n)=I$, it follows that the sequence of arcs $\tilde M_n(I_n)$ expands to fill $\partial \D\setminus \{-1\}$.

Since $B_n(\zeta)\in I_n$ at most finitely often for almost every $\zeta \in \partial \D$, it follows that
\[
\tilde F_n(\zeta) =\tilde M_n\circ B_n\circ \tilde M_{0}^{-1}\in \tilde M_n(I_n),
\]
at most finitely often, for almost every $\zeta \in \partial \D$. The result follows.
\end{proof}

Our final example relates to part~(c) of Summary~\ref{sum:DW-rec}, our results related to the ADM dichotomy, by showing that sequences satisfying the same conditions as Example \ref{ex0.2}, stated in  (\ref{Fnprops}), may have the opposite behaviour.
\begin{ex}\label{innerfn}
There exists a forward composition of inner functions $(F_n)$ with contraction sequence $\lambda_n=1-\mu_n$ such that
\begin{equation}\label{eq:three1}
\sum_{n=1}^{\infty} (1-|F_n(0)|) = \infty,\quad \sum_{n=1}^{\infty}\mu_n (1-|F_n(0)|) < \infty, \quad \sum_{n=1}^{\infty} \mu_n = \infty,
\end{equation}
and almost all boundary orbits for $(F_n)$ are dense in $\partial \D.$
\end{ex}
\begin{proof}
We define $F_n=f^n$ for $n\in \N$, where $f$ is the Blaschke product
\[
f(z)=\left(\frac{z+1/3}{1+z/3}\right)^2,
\]
which is an inner function with Denjoy--Wolff point~1, a parabolic fixed point.

We now estimate $\mu_n$ and $\lambda_n$, which are not constant since $f$ does not fix the origin. Since $f(x)>0$ for $-1<x<1$, $f(1)=1$, $f'(1)=1$, $f''(1)=0$ and $f'''(1)=-1/16\neq 0$, we have
\begin{equation}\label{eq:est}
1-F_n(0)=1-f^n(0)\sim \frac1{n^{1/2}} \;\text{ as }n\to\infty;
\end{equation}
see, for example, \cite[proof of Theorem~3.4]{BEFRS}, which builds on the proof of \cite[Theorem~6.5.4]{Beardon}. Also,
\begin{equation}\label{eq:density}
\rho_{\D}(z)=\frac2{1-|z|^2}, \text{ for } z\in\D.
\end{equation}
Hence,
\begin{align*}
\lambda_n&=\frac{\rho_\D(f^{n}(0)) |f'(f^{n-1}(0))|}{\rho_\D(f^{n-1}(0))}\\
&=\frac{(1-f^{n-1}(0)^2)f'(f^{n-1}(0))}{1-f^{n}(0)^2}.
\end{align*}
Now a calculation using \eqref{eq:est} and the Taylor expansion of~$f$ about the point~1 shows that
\[
\mu_n=1-\lambda_n \sim \frac1n\;\text{ as } n\to \infty.
\]
Thus the conditions in \eqref{eq:three1} are satisfied. In particular, Theorem~\ref{thm:DM}, part~(b) can be applied to show that almost all boundary orbits for $(F_n)$ are dense in $\partial \D$.
\end{proof}


\bibliography{Wandering2}
\end{document}